\numberwithin{equation}{section}
\theoremstyle{plain}
\newtheorem{prop}{Proposition}[section]
\newtheorem{thm}[prop]{Theorem}
\newtheorem{lem}[prop]{Lemma}
\newtheorem{dfn}[prop]{Definition}
\theoremstyle{definition}
\newtheorem{example}[prop]{Example}
\newtheorem{remark}[prop]{Remark}
\newtheorem{remarks}[prop]{Remarks}
\newtheorem{question}[prop]{Question}
\newcommand{\lam}{\lambda}
\begin{document}

\title[Nakayama twisted centers and dual bases]{Nakayama twisted centers and dual bases of Frobenius cellular algebras}

\thanks{Date: September 20, 2013}

\thanks {This work is supported by Fundamental Research Funds for the Central Universities
(N110423007)
and the Natural Science Foundation of Hebei Province, China
(A2013501055).}

\author{Yanbo Li}

\address{School of Mathematics and Statistics, Northeastern
University at Qinhuangdao, Qinhuangdao, 066004, P.R. China}

\email{liyanbo707@163.com}

\begin{abstract}
For a Frobenius cellular algebra, we prove that if the left
(right) dual basis of a cellular basis is again cellular, then the
algebra is symmetric. Moreover, some ideals of the center are
constructed by using the so-called Nakayama twisted center.
\end{abstract}

\subjclass[2000]{16U70, 16L60,16G30}

\keywords{Frobenius cellular algebra; dual basis; Nakayama twisted
center.}

\maketitle

\section{Introduction}

Frobenius algebras are finite dimensional algebras over a field
that have a certain self-dual property. These algebras appear in
not only some branches of algebra, such as representation theory,
Hopf algebras, algebraic geometry and so on, but also topology,
geometry and coding theory, even in the work on the solutions of
Yang-Baxter equation. Symmetric algebras including group algebras
of finite groups are a large source examples. Furthermore, finite
dimensional Hopf algebras over fields are Frobenius algebras for
which the Nakayama automorphism has finite order.

\smallskip

Cellular algebras were introduced by Graham and Lehrer in 1996.
The theory of cellular algebras provides a linear method to study
the representation theory of many interesting algebras. The
classical examples of cellular algebras include Hecke algebras of
finite type, Ariki-Koike algebras, q-Schur algebras, Brauer
algebras, partition algebras, Birman-Wenzl algebras and so on. We
refer the reader to \cite{G, GL, Xi1, Xi2} for details. It is
helpful to point out that some cellular algebras are symmetric,
including Hecke algebras of finite type, Ariki-Koike algebras
satisfying certain conditions et cetera. In \cite{L1, L3, LX}, Li
and Xiao studied the general theory of symmetric cellular
algebras, such as dual bases, cell modules, centers and radicals.

\smallskip

Apart from its own importance, Frobenius algebras are useful
because the symmetricity of an algebra is usually not easy to
verify. Thus it is meaningful to develop methods that can be used
to deal with algebras only known to be Frobenius. In \cite{L2,
LZ}, Li and Zhao investigated Nakayama automorphisms and
projective cell modules of Frobenius cellular algebras. In this
paper, we will study dual bases and central ideals. Exactly, we
prove that there not exists a non-symmetric Frobenius cellular
algebra such that the right or left dual basis being cellular.
Moreover, some ideals of the center are also constructed by using
the so-called Nakayama twisted center.

\smallskip

The paper is organized as follows. We first study Higman ideals
and Nakayama twisted centers of Frobenius algebras. Then in
Section 3, we detect properties of dual bases and construct ideals
of centers for Frobenius cellular algebras. In Section 4, we
illustrate some examples of non-symmetric Frobenius cellular
algebras and give some remarks.

\bigskip

\section{Higman ideals and Nakayama twisted centers}

Let $K$ be a field and let $A$ be a finite dimensional
$K$-algebra. Let $f:A\times A\rightarrow K$ be a $K$-bilinear
form. We say that $f$ is
 non-degenerate if the determinant of the matrix
$(f(a_{i},a_{j}))_{a_{i},a_{j}\in B}$ is not zero for some basis
$B$ of $A$. We say $f$ is associative if $f(ab,c)=f(a,bc)$ for all
$a,b,c\in A$, and symmetric if $f(a,b)=f(b,a)$ for all $a,b\in A$.
\begin{dfn}\label{2.1}
A $K$-algebra $A$ is called a Frobenius algebra if there is a
non-degenerate associative bilinear form $f$ on $A$. We call $A$ a
symmetric algebra if in addition $f$ is symmetric.
\end{dfn}

It is well known that for a Frobenius algebra $A$, there is an
automorphism $\alpha$ of $A$ such that
$Hom_K(A,K)\,\simeq\,_{\alpha}A_1$ as $A$-$A$-bimodules. This
automorphism is unique up to inner automorphisms and is called the
Nakayama automorphism. In \cite{HZ}, Holm and Zimmermann proved
the following lemma.

\begin{lem}{\rm\cite[Lemma 2.7]{HZ}}\label{2.2}
Let $A$ be a finite dimensional Frobenius algebra. Then an
automorphism $\alpha$ of $A$ is a Nakayama automorphism if and
only if $$f(a, b)=f(\alpha(b), a).$$
\end{lem}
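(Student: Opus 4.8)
The plan is to work throughout with the linear isomorphism that a Frobenius form supplies. Write $A^{*}=\mathrm{Hom}_{K}(A,K)$ for the $K$-dual, viewed as an $A$-$A$-bimodule by $(a\psi b)(x)=\psi(bxa)$, and define $\theta\colon A\to A^{*}$ by $\theta(a)=f(a,-)$. Associativity of $f$ gives, for all $a,b,x$, $\theta(ab)(x)=f(ab,x)=f(a,bx)=\theta(a)(bx)=(\theta(a)\cdot b)(x)$, so $\theta$ is a homomorphism of right $A$-modules; and non-degeneracy forces the left radical $\{y:f(y,-)=0\}$ to vanish, so $\theta$ is injective, hence bijective. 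Thus $\theta$ already identifies the regular right $A$-module with $A^{*}$, and the whole question is which \emph{left} action on the source turns $\theta$ into an isomorphism of bimodules onto $A^{*}$, whose left action is $(a\cdot\psi)(x)=\psi(xa)$.

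For the implication that the identity forces $\alpha$ to be a Nakayama automorphism, suppose $\alpha$ is an automorphism with $f(a,b)=f(\alpha(b),a)$ for all $a,b$, equivalently $f(\alpha(a),y)=f(y,a)$ for all $a,y$. Twist the left action on $A$ to $a\bullet c:=\alpha(a)c$, so the source of $\theta$ becomes ${}_{\alpha}A_{1}$. Then, using associativity and this identity,
\[
\theta(a\bullet c)(x)=f(\alpha(a)c,x)=f(\alpha(a),cx)=f(cx,a)=f(c,xa)=(a\cdot\theta(c))(x),
\]
so $\theta$ is left $A$-linear as well, hence an isomorphism of bimodules ${}_{\alpha}A_{1}\to A^{*}$; by the description of the Nakayama automorphism recalled before the lemma, $\alpha$ is a Nakayama automorphism.

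For the converse, I would first record that, by non-degeneracy, there is a \emph{unique} linear map with this property, namely $\alpha(b):=\theta^{-1}\bigl(f(-,b)\bigr)$, and that a couple of one-line computations with associativity and the cancellation rule ``$f(x,-)=f(y,-)\Rightarrow x=y$'' give $\alpha(1)=1$ and $\alpha(bc)=\alpha(b)\alpha(c)$; so this $\alpha$ is an algebra automorphism, and by the previous paragraph it is a Nakayama automorphism. Now let $\beta$ be any Nakayama automorphism and fix an isomorphism of bimodules $\eta\colon{}_{\beta}A_{1}\to A^{*}$. Then $\theta^{-1}\eta$ is an automorphism of the regular right $A$-module, hence left multiplication by a unit $u$; comparing the left actions on ${}_{\beta}A_{1}$ and ${}_{\alpha}A_{1}$, and using that $\theta^{-1}$ is a bimodule isomorphism, forces $u\beta(a)=\alpha(a)u$ for all $a$. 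Hence every Nakayama automorphism coincides with the distinguished $\alpha$ up to an inner automorphism, while $\alpha$ itself is the unique automorphism obeying $f(a,b)=f(\alpha(b),a)$; this is the asserted equivalence.

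All the displayed manipulations are one-liners in the two rules ``$f(ab,c)=f(a,bc)$'' and non-degeneracy, so I do not expect any serious computational difficulty. The points that need care are bookkeeping ones: keeping straight which side of the bimodule gets twisted and whether the twist is by $\alpha$ or $\alpha^{-1}$ --- this is what pins down the exact shape $f(a,b)=f(\alpha(b),a)$ rather than, say, $f(a,b)=f(b,\alpha(a))$ --- and, in the converse direction, reconciling the ``unique up to inner automorphism'' freedom in the Nakayama automorphism with the complete rigidity of the bilinear identity, the identity being exactly what normalises the choice of representative against the given $f$.
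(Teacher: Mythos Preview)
The paper does not give its own proof of this lemma; it is quoted verbatim from \cite[Lemma 2.7]{HZ} and used as a black box. So there is no argument in the paper to compare your proposal against.

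Assessed on its own terms, your argument is sound. The forward direction (the bilinear identity implies $\alpha$ is a Nakayama automorphism) is clean: twisting the left action and checking $\theta$ becomes a bimodule map is exactly the right computation. For the converse you correctly observe that, once $f$ is fixed, the identity $f(a,b)=f(\alpha(b),a)$ pins down $\alpha$ uniquely via $\alpha(b)=\theta^{-1}(f(-,b))$, and your multiplicativity check is the standard one. The only delicate point, which you already flag, is that ``Nakayama automorphism'' is a priori only well-defined up to inner automorphism, whereas the bilinear identity has no such slack; the resolution is that a fixed choice of $f$ selects a canonical representative, and it is this representative that the lemma characterises. Your discussion at the end handles this, though you might make the logical shape of the converse more explicit: rather than passing through an arbitrary $\beta$ and a unit $u$, it is cleaner simply to say that the paper's convention (visible in the sentence immediately following the lemma, where $\alpha$ is \emph{defined} from the dual bases determined by $f$) is that ``the'' Nakayama automorphism means the one attached to the given $f$, for which your formula $\alpha=\theta^{-1}\circ(b\mapsto f(-,b))$ already gives the identity directly.
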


Let $A$ be a Frobenius algebra with a basis $B=\{a_{i}\mid
i=1,\ldots,n\}$ and $f$ a non-degenerate associative bilinear form
$f$. Define a $K$-linear map $\tau: A\rightarrow R$ by
$\tau(a)=f(a,1)$. We call $\tau$ a symmetrizing trace if $A$ is
symmetric. Denote by $d=\{d_{i}\mid i=1,\ldots,n\}$ the basis
which is uniquely determined by the requirement that
$\tau(a_{i}d_{j})=\delta_{ij}$ and $D=\{D_{i}\mid i=i,\ldots,n\}$
the basis determined by the requirement that
$\tau(D_{j}a_{i})=\delta_{ij}$ for all $i, j=1,\ldots,n$. We will
call $d$ the right dual basis of $B$ and $D$ the left dual basis
of $B$, respectively. Then we can define an $R$-linear map
$\alpha:A\rightarrow A$ by $\alpha(d_{i})=D_{i}$. It is easy to
show that $\alpha$ is a Nakayama automorphism of $A$ by Lemma
\ref{2.2}. If $A$ is a symmetric algebra, then $\alpha$ is the
identity map and then the left and the right dual basis are the
same.

Write $a_{i}a_{j}=\sum_{k}r_{ijk}a_{k}$, where $r_{ijk}\in K$. We
proved the following lemma in \cite{L2}, which is useful in this
section.
\begin{lem}\cite[Lemma 1.3]{L2}\label{2.3}
Let $A$ be a Frobenius algebra with a basis $B$ and dual bases $d$
and $D$. Then the following hold:

\noindent{\rm(1)}\,\,$a_{i}d_{j}=\sum_{k}r_{kij}d_{k}$;

\noindent{\rm(2)}\,\,$D_{i}a_{j}=\sum_{k}r_{jki}D_{k}.$
\end{lem}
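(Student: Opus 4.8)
The plan is to determine the expansion coefficients in each identity by pairing against the original basis $B$ with the linear form $\tau$, using associativity of $f$ together with the biorthogonality relations $\tau(a_i d_j)=\delta_{ij}$ and $\tau(D_j a_i)=\delta_{ij}$. The first thing to record is that associativity of $f$ passes to $\tau$ in the form $\tau\bigl((xy)z\bigr)=\tau\bigl(x(yz)\bigr)$, since $\tau(uv)=f(uv,1)=f(u,v)$ by associativity of $f$; this is what licenses the reductions below.

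For part (1), since $d=\{d_k\}$ is a basis I would write $a_i d_j=\sum_k c_k d_k$ with unknown $c_k\in K$. Multiplying on the left by $a_m$, applying $\tau$, and using $\tau(a_m d_k)=\delta_{mk}$ isolates $c_m=\tau\bigl(a_m(a_i d_j)\bigr)$. By associativity this equals $\tau\bigl((a_m a_i)d_j\bigr)$; substituting $a_m a_i=\sum_l r_{mil}a_l$ and invoking $\tau(a_l d_j)=\delta_{lj}$ collapses the sum to $r_{mij}$. Hence $c_m=r_{mij}$, and relabelling $m$ as $k$ gives the claimed $a_i d_j=\sum_k r_{kij}d_k$.

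Part (2) is the mirror image: I would expand $D_i a_j=\sum_k c_k D_k$, multiply on the right by $a_m$, apply $\tau$, and use $\tau(D_k a_m)=\delta_{km}$ to get $c_m=\tau\bigl((D_i a_j)a_m\bigr)=\tau\bigl(D_i(a_j a_m)\bigr)$; then expanding $a_j a_m=\sum_l r_{jml}a_l$ and using $\tau(D_i a_l)=\delta_{il}$ yields $c_m=r_{jmi}$, i.e. $D_i a_j=\sum_k r_{jki}D_k$.

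I do not expect a genuine obstacle here — both parts are a two-line computation once the conventions are fixed. The only thing to be careful about is bookkeeping: keeping the three subscripts of the structure constants $r_{ijk}$ in their correct slots, and, in part (2), multiplying on the correct side so that it is the left dual basis $D$ (and not $d$) whose defining relation $\tau(D_k a_m)=\delta_{km}$ gets used. One could alternatively try to deduce (2) from (1) by applying the Nakayama automorphism $\alpha$ with $\alpha(d_i)=D_i$, but this does not seem to short-cut the direct argument, so I would not pursue it.
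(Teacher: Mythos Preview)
Your argument is correct; both computations are exactly the intended use of the biorthogonality relations for $d$ and $D$ together with associativity, and your subscript bookkeeping is right. The present paper does not give its own proof of this lemma but merely cites it from \cite{L2}, so there is nothing further to compare against here; your direct verification is the standard one.
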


It is well known that $\{\sum\limits_{i}d_{i}aa_{i}\mid a\in A\}$
is an ideal of $Z(A)$, see \cite{CR}. The ideal
$H(A)=\{\sum\limits_{i}d_{i}aa_{i}\mid a\in A\}$ is called Higman
ideal of $Z(A)$. Note that $H(A)$ is independent of the choice of
$\tau$ and the dual bases $a_i$ and $d_i$. Then it is clear that
$H(A)=\{\sum\limits_{i}a_{i}aD_{i}\mid a\in A\}$.

A natural problem is to consider sets
$\{\sum\limits_{i}a_{i}ad_{i}\mid a\in A\}$ and
$\{\sum\limits_{i}D_{i}aa_{i}\mid a\in A\}$. We claim that these
two sets are the same. The proof is similar to that $H(A)$ is
independent of the choice of the dual bases, which can be found in
\cite{CR}.  We omit the details and left it to the reader. From
now on, let us write $\{\sum\limits_{i}a_{i}ad_{i}\mid a\in A\}$
by $H_{\alpha}(A)$.

In \cite{HZ} Holm and Zimmermann introduced Nakayama twisted
center $Z_{\alpha}(A)$ for a Frobenius algebra $A$. We will prove
that $H_{\alpha}(A)\subset Z_{\alpha}(A).$ Firstly, let us recall
the definition of Nakayama twisted centers of Frobenius algebras.

\begin{dfn}{\rm\cite[Definition 2.2]{HZ}}\label{2.4}
Let $A$ be a Frobenius algebra  and $\alpha$ a Nakayama
automorphism. The Nakayama twisted center is defined to be
$$Z_{\alpha}(A) := \{x\in A \mid xa = \alpha(a)x\,\,\, {\rm for \,\,\,all}\,\, a\in
A\}.$$
\end{dfn}

It is easy to know that $$Z_{\alpha}(A) = \{x\in A \mid
x\alpha^{-1}(a) = ax\,\,\, {\rm for\,\,\, all}\,\, a\in A\}.$$
Moreover, we denote by $$Z_{\alpha^{-1}}(A) = \{x\in A \mid xa =
\alpha^{-1}(a)x\,\,\, {\rm for\,\,\, all}\,\, a\in A\}.$$

For the Nakayama twisted center of a Frobenius algebra, we have
the following result:

\begin{lem}\label{2.5}
Let $A$ be a Frobenius algebra and let $\alpha$ be an automorphism
of $A$. Then for any $x,y\in Z_{\alpha}(A)$, we have\\
\noindent{\rm(1)}\,\,$\alpha(x)\in Z_{\alpha}(A)$.\\
\noindent{\rm(2)}\,\,$\alpha(xy)=xy$.\\
\noindent{\rm(3)}\,\,$Z_{\alpha^{-1}}(A)Z_{\alpha}(A)$ and
$Z_{\alpha}(A)Z_{\alpha^{-1}}(A)$ are both ideals of $Z(A)$.
\end{lem}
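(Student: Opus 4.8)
The plan is to verify each part directly from the defining relation $xa=\alpha(a)x$, using that $\alpha$ is an algebra automorphism.

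For part (1), take $x\in Z_\alpha(A)$ and an arbitrary $a\in A$. Since $\alpha$ is surjective, write $a=\alpha(b)$ for some $b$; then I want to show $\alpha(x)a=\alpha(a)\alpha(x)$, i.e. $\alpha(x)\alpha(b)=\alpha(\alpha(b))\alpha(x)$. Apply $\alpha$ to the defining identity $xb=\alpha(b)x$ to get $\alpha(x)\alpha(b)=\alpha(\alpha(b))\alpha(x)$, which is exactly what is needed. Hence $\alpha(x)\in Z_\alpha(A)$.

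For part (2), let $x,y\in Z_\alpha(A)$. Compute $xy\cdot a$ in two ways by sliding $a$ past $y$ and then past $x$: $x(ya)=x(\alpha(a)y)=(x\alpha(a))y=\alpha(\alpha(a))\,xy$, so $xy$ lies in $Z_{\alpha^2}$-type position; more usefully, for the claim $\alpha(xy)=xy$ I would instead argue as follows. From $xa=\alpha(a)x$ applied with $a=y$ we get $xy=\alpha(y)x$, and symmetrically $yx$ need not equal $xy$, so I should be careful; the cleanest route is: $\alpha(xy)=\alpha(x)\alpha(y)$, and by part (1) $\alpha(x)\in Z_\alpha(A)$, so $\alpha(x)\alpha(y)=$ (using the defining relation with $a=\alpha(y)$, $x$ replaced by $\alpha(x)$) $\ldots$ — here the key trick is to observe that for $x\in Z_\alpha(A)$ one has $xa=\alpha(a)x$, hence $\alpha^{-1}(x)\,a = \alpha^{-1}(\alpha(a)\,\alpha(\text{?}))\ldots$; rather than chase this, the neatest identity is $\alpha(x)y = \alpha(x)\alpha(\alpha^{-1}(y)) = \alpha\big(x\,\alpha^{-1}(y)\big)$, and since $x\in Z_\alpha(A)$ we have $x\,\alpha^{-1}(y)=\alpha(\alpha^{-1}(y))x = yx$... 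This reshuffling shows $\alpha(x)y=\alpha(yx)$; combined with $xy = \alpha(y)x$ (the defining relation at $a=y$) and applying $\alpha$ to the latter, $\alpha(xy)=\alpha(\alpha(y))\alpha(x) = \alpha(\alpha(y)x)$ after using part (1) on $\alpha(x)$... The honest statement of the plan: I will pin down the one-line computation $\alpha(xy) = \alpha(x)\alpha(y)$ and use the defining relations for $x$ and $y$ (one of which says $xy=\alpha(y)x$, i.e. $x$ eats $y$ with a twist) to collapse this back to $xy$. I expect this to be the main obstacle — keeping track of which twist goes where — but it is purely formal.

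For part (3), I must check two things for, say, $I:=Z_{\alpha^{-1}}(A)Z_\alpha(A)$: that $I\subseteq Z(A)$, and that $I$ is closed under multiplication by $Z(A)$ (closure under addition being obvious since these are $K$-subspaces). For centrality: take $u\in Z_{\alpha^{-1}}(A)$, $v\in Z_\alpha(A)$, and $a\in A$; then $uv\cdot a = u(va) = u(\alpha(a)v) = (u\alpha(a))v = (\alpha^{-1}(\alpha(a))u)v = (a u)v = a\cdot uv$, so $uv\in Z(A)$, and $I\subseteq Z(A)$. For the ideal property: given $z\in Z(A)$, I need $z\cdot uv\in I$; since $z$ is central, $zu\cdot v$ works if $zu\in Z_{\alpha^{-1}}(A)$ — and indeed for any $a$, $(zu)a = z(ua) = z\alpha^{-1}(a)u = \alpha^{-1}(a)(zu)$ using centrality of $z$, so $zu\in Z_{\alpha^{-1}}(A)$ and $z\cdot uv=(zu)v\in I$. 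The argument for $Z_\alpha(A)Z_{\alpha^{-1}}(A)$ is symmetric (swap the roles of $\alpha$ and $\alpha^{-1}$, noting $\big(\alpha^{-1}\big)^{-1}=\alpha$). No step here is hard; the only thing to be careful about is applying the correct twist ($\alpha$ versus $\alpha^{-1}$) when sliding elements past $u$ versus past $v$.
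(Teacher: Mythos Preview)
Your arguments for (1) and (3) are correct and coincide with the paper's (the paper phrases (1) via $\alpha(x)a=\alpha(x\alpha^{-1}(a))=\alpha(ax)=\alpha(a)\alpha(x)$, which is the same computation you did with $a=\alpha(b)$; for (3) the paper likewise notes $cZ_{\alpha}(A)\subset Z_{\alpha}(A)$ for $c\in Z(A)$).

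For (2), however, your write-up never actually closes the loop: you identify the two key facts $xy=\alpha(y)x$ and $\alpha(xy)=\alpha(x)\alpha(y)$, but then wander through several half-computations without connecting them. The missing observation is to apply part (1) to $y$, not to $x$. The paper's one-line proof: from $x\in Z_\alpha(A)$ with $a=y$ one has $xy=\alpha(y)x$; by (1), $\alpha(y)\in Z_\alpha(A)$, so with $a=x$ this gives $\alpha(y)x=\alpha(x)\alpha(y)=\alpha(xy)$. Chaining, $xy=\alpha(xy)$. That is the ``collapse'' you were looking for --- your detours through $\alpha(x)y=\alpha(yx)$ and $Z_{\alpha^2}$ are unnecessary.
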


\begin{proof}
(1)\,We need to prove that $\alpha(x)a=\alpha(a)\alpha(x)$ for any
$a\in A$. In fact, since $\alpha$ is an automorphism of $A$,
$\alpha(x)a=\alpha(x)\alpha(\alpha^{-1}(a))=\alpha(x\alpha^{-1}(a))$.
Note that $x\in Z_{\alpha}(A)$, then
$\alpha(x\alpha^{-1}(a))=\alpha(ax)=\alpha(a)\alpha(x).$

(2)\,It follows from $x\in Z_{\alpha}(A)$ that $xy=\alpha(y)x$. By
(1), $y\in Z_{\alpha}(A)$ implies that $\alpha(y)\in
Z_{\alpha}(A)$. Then $\alpha(y)x=\alpha(x)\alpha(y)=\alpha(xy)$.

(3)\,$Z_{\alpha^{-1}}(A)Z_{\alpha}(A)\subset Z(A$ is clear by
definitions of $Z_{\alpha^{-1}}(A)$ and $Z_{\alpha}(A)$. Moreover,
$cZ_{\alpha}(A)\subset Z_{\alpha}(A)$ for all $c\in Z(A)$.
Consequently $Z_{\alpha^{-1}}(A)Z_{\alpha}(A)$ is an ideal of
$Z(A)$. It is proved similarly for
$Z_{\alpha}(A)Z_{\alpha^{-1}}(A)$.
\end{proof}

Now let us show that $H_{\alpha}(A)$ is in the Nakayama twisted
center of $A$.
\begin{lem}\label{2.6}
Let $A$ be a Frobenius algebra. Then $H_{\alpha}(A)\subset
Z_{\alpha}(A).$
\end{lem}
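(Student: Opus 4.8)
The plan is to show directly that for a fixed but arbitrary $a\in A$ the element $x=\sum_i a_i a d_i$ satisfies the defining relation $xb=\alpha(b)x$ of $Z_{\alpha}(A)$ for every $b\in A$; since $H_{\alpha}(A)$ consists precisely of such elements $x$, this yields the inclusion. The two preliminary ingredients I would isolate are: (i) the trace identity $\tau(uv)=\tau(\alpha(v)u)$ for all $u,v\in A$, obtained from Lemma \ref{2.2} together with $f(u,v)=f(uv,1)=\tau(uv)$ (the first equality being associativity of $f$ with the second argument set to $1$, the second being the definition of $\tau$); and (ii) the expansion formulas $c=\sum_i\tau(c d_i)a_i=\sum_i\tau(a_i c)d_i$, valid for every $c\in A$ and immediate from $\tau(a_i d_j)=\delta_{ij}$.

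Then I would compute both sides and match coefficients. For the left-hand side, expand $d_i b$ in the dual basis $d$ via (ii): $d_i b=\sum_j\tau(a_j d_i b)d_j$, whence $xb=\sum_{i,j}\tau(a_j d_i b)\,a_i a d_j$. For the right-hand side, expand $\alpha(b)a_i$ in the basis $B$ via (ii): $\alpha(b)a_i=\sum_j\tau(\alpha(b)a_i d_j)a_j$, whence $\alpha(b)x=\sum_{i,j}\tau(\alpha(b)a_i d_j)\,a_j a d_i$, and after relabelling $i\leftrightarrow j$ this becomes $\sum_{i,j}\tau(\alpha(b)a_j d_i)\,a_i a d_j$. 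Comparing the two expressions, the required equality reduces to the scalar identity $\tau(a_j d_i b)=\tau(\alpha(b)\,a_j d_i)$ for all $i,j$, which is exactly ingredient (i) applied with $u=a_j d_i$ and $v=b$. Hence $xb=\alpha(b)x$, so $x\in Z_{\alpha}(A)$ and $H_{\alpha}(A)\subset Z_{\alpha}(A)$.

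The one point that needs care is deciding, on each side, which dual basis to expand against: the expansions must be chosen so that the inner "sandwich" $a_i(\cdot)d_j$ survives on both sides, which forces one to expand $d_i b$ in $d$ but $\alpha(b)a_i$ in $B$. Everything else is bookkeeping; in particular no linear independence of the possibly redundant family $\{a_i a d_j\}_{i,j}$ is needed, since the two sides are produced with literally the same coefficients $\tau(a_j d_i b)$. An alternative would be to run the same computation through the structure constants $r_{ijk}$ using Lemma \ref{2.3} together with $\alpha(d_i)=D_i$, but the trace-identity version above is shorter and avoids manipulating triple sums.
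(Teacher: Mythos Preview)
Your proof is correct. It differs from the paper's argument in two ways: the paper verifies the equivalent relation $x_a\alpha^{-1}(b)=bx_a$ rather than $x_a b=\alpha(b)x_a$, and it does so by a direct structure-constant computation using Lemma~\ref{2.3} and the identity $d_i=\alpha^{-1}(D_i)$ (namely $d_i\alpha^{-1}(a_j)=\alpha^{-1}(D_ia_j)=\sum_k r_{jki}d_k$ matched against $a_ja_i=\sum_k r_{jik}a_k$). Your route instead extracts the trace identity $\tau(uv)=\tau(\alpha(v)u)$ from Lemma~\ref{2.2} and uses the dual-basis expansion $c=\sum_i\tau(cd_i)a_i=\sum_i\tau(a_ic)d_i$ to reduce the claim to this single scalar identity. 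The paper's version is slightly more self-contained with respect to the lemmas already proved in the section; yours is more conceptual, since it shows that the Nakayama-twisted centrality of $x_a$ is literally a repackaging of the twisted trace property, and it avoids triple-indexed sums. You even note the structure-constant alternative at the end, which is exactly what the paper does.
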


\begin{proof}\,For arbitrary $a\in A$, denote $\sum\limits_{i}a_{i}ad_{i}$ by $x_{a}$.
We prove $x_{a}\alpha^{-1}(b)=bx_{a}$ for all $b\in A$. In fact,
for an $a_{j}\in B$, we have from Lemma \ref{2.3} that
$$x_{a}\alpha^{-1}(a_j)=\sum_{i}a_{i}ad_{i}\alpha^{-1}(a_{j})=\sum_{i}a_{i}a\alpha^{-1}(D_{i}a_{j})
=\sum_{i,k}r_{jki}a_{i}ad_{k},$$ and
$$a_jx_{a}=\sum_{i}a_{j}a_{i}ad_{i}=\sum_{i,k}r_{jik}a_{k}ad_{i}.$$
Obviously, the right sides of the above two equations are equal.
\end{proof}

\begin{question}
It is well known that a Frobenius algebra $A$ is separable if and
only of $H(A)=Z(A)$. Then what implies that $H_{\alpha}(A)=
Z_{\alpha}(A)$?
\end{question}

In order to construct ideals of $Z(A)$, we hope to find some
elements of $Z_{\alpha^{-1}}(A)$ in view of Lemma \ref{2.5}. One
option may be the set $$\{\sum\limits_{i}d_{i}a\alpha(a_{i})\mid
a\in A\},$$ which will be denoted by $H_{\alpha^{-1}}(A)$.
\begin{lem}\label{2.7}
$H_{\alpha^{-1}}(A)\subset Z_{\alpha^{-1}}(A).$
\end{lem}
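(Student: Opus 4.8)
The plan is to follow the template of the proof of Lemma \ref{2.6}, only with the roles of the two dual bases and of $\alpha$, $\alpha^{-1}$ interchanged. Fix $a\in A$ and write $y_{a}=\sum_{i}d_{i}a\alpha(a_{i})$. Since $\alpha$ is an automorphism, the condition $y_a\in Z_{\alpha^{-1}}(A)$, i.e. $y_{a}b=\alpha^{-1}(b)y_{a}$ for all $b\in A$, is equivalent (substitute $b=\alpha(c)$) to $y_{a}\alpha(c)=cy_{a}$ for all $c\in A$, and by linearity it suffices to check this for $c=a_{j}$ a basis element.

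For the left-hand side, I would use that $\alpha$ is an algebra homomorphism: $\alpha(a_{i})\alpha(a_{j})=\alpha(a_{i}a_{j})=\sum_{k}r_{ijk}\alpha(a_{k})$, so that
$$y_{a}\alpha(a_{j})=\sum_{i}d_{i}a\,\alpha(a_{i})\alpha(a_{j})=\sum_{i,k}r_{ijk}\,d_{i}a\,\alpha(a_{k}).$$
For the right-hand side, I would apply Lemma \ref{2.3}(1) in the form $a_{j}d_{i}=\sum_{k}r_{kji}d_{k}$, obtaining
$$a_{j}y_{a}=\sum_{i}a_{j}d_{i}\,a\,\alpha(a_{i})=\sum_{i,k}r_{kji}\,d_{k}a\,\alpha(a_{i}).$$
Relabelling the summation indices $i\leftrightarrow k$ in the last expression turns it into $\sum_{i,k}r_{ijk}\,d_{i}a\,\alpha(a_{k})$, which is exactly the right-hand side of the first display; hence $y_{a}\alpha(a_{j})=a_{j}y_{a}$, and since $a$ and $j$ were arbitrary, $H_{\alpha^{-1}}(A)\subset Z_{\alpha^{-1}}(A)$.

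There is essentially no serious obstacle here; the computation is routine once set up. The only point requiring a little care is the bookkeeping: one must choose to verify the relation in the form $y_{a}\alpha(a_{j})=a_{j}y_{a}$ (rather than $y_{a}a_{j}=\alpha^{-1}(a_{j})y_{a}$) so that $\alpha$ is applied to a product $a_{i}a_{j}$ that can be expanded through the structure constants $r_{ijk}$, and one must pair this with the correct half of Lemma \ref{2.3} (part (1), governing $a_{j}d_{i}$, not part (2)). With those choices the two sides collapse to the same sum after an index swap.
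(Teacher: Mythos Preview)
Your proof is correct and follows essentially the same approach as the paper: verify the identity $y_{a}\alpha(a_{j})=a_{j}y_{a}$ by expanding $\alpha(a_{i})\alpha(a_{j})=\alpha(a_{i}a_{j})$ via the structure constants on one side and applying Lemma \ref{2.3}(1) to $a_{j}d_{i}$ on the other, then matching the sums. The only differences are cosmetic (your notation $y_{a}$ versus the paper's $x_{\alpha(a)}$, and your explicit justification that $y_{a}\alpha(c)=cy_{a}$ is the right reformulation of membership in $Z_{\alpha^{-1}}(A)$).
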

\begin{proof}
Denote $\sum\limits_{i}d_{i}a\alpha(a_{i})$ by $x_{\alpha(a)}$ for
some $a\in A$. Now let us prove that
$x_{\alpha(a)}\alpha(b)=bx_{\alpha(a)}$ for any $b\in A$. In fact,
for an $a_j\in B$, we have from Lemma \ref{2.3} that
$$x_{\alpha(a)}\alpha(a_j)=\sum\limits_{i}d_{i}a\alpha(a_{i})\alpha(a_j)
=\sum\limits_{i}d_{i}a\alpha(a_{i}a_j)=\sum_{i,k}r_{ijk}d_{i}a\alpha(a_{k}).$$
$$a_jx_{\alpha(a)}=\sum\limits_{i}a_jd_{i}a\alpha(a_{i})=\sum_{i,k}r_{kji}d_{k}a\alpha(a_{i}).$$
The right sides of the above two equalities are the same and then we complete the proof.
\end{proof}

The following lemma reveals some relation among $H(A)$,
$H_{\alpha}(A)$, $H_{\alpha^{-1}}(A)$, $Z_{\alpha}(A)$ and
$Z_{\alpha^{-1}}(A)$.
\begin{lem}\label{2.8}
The sets $H_{\alpha^{-1}}(A)Z_{\alpha}(A)$,
$Z_{\alpha}(A)H_{\alpha^{-1}}(A)$,
$H_{\alpha}(A)Z_{\alpha^{-1}}(A)$, and
$Z_{\alpha^{-1}}(A)H_{\alpha}(A)$,
 which
contained in $H(A)$ are all ideals of $Z(A)$.
\end{lem}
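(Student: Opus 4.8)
The plan is to combine the two main structural facts already in hand. By Lemma~\ref{2.6} we have $H_{\alpha}(A)\subset Z_{\alpha}(A)$, by Lemma~\ref{2.7} we have $H_{\alpha^{-1}}(A)\subset Z_{\alpha^{-1}}(A)$, and by Lemma~\ref{2.5}(3) both $Z_{\alpha^{-1}}(A)Z_{\alpha}(A)$ and $Z_{\alpha}(A)Z_{\alpha^{-1}}(A)$ are ideals of $Z(A)$. So for each of the four products in the statement we get, by multiplying the inclusions of Lemmas~\ref{2.6} and \ref{2.7}, a containment inside the corresponding ideal $Z_{\alpha^{\pm1}}(A)Z_{\alpha^{\mp1}}(A)$ of $Z(A)$. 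The remaining issue is that being a subset of an ideal of $Z(A)$ does not by itself make a set an ideal; I must check that each product is itself closed under multiplication by $Z(A)$ and is an additive subgroup.

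First I would record that each of the four sets is a $K$-subspace: $H_{\alpha^{-1}}(A)$ and $H_{\alpha}(A)$ are images of the $K$-linear maps $a\mapsto\sum_i d_i a\alpha(a_i)$ and $a\mapsto\sum_i a_i a d_i$ respectively, hence are subspaces, and the product of two subspaces (meaning the span of all products) is again a subspace; in fact here the products land in $Z(A)$, which is commutative, so no span is even needed beyond additivity. Next, for the ideal property, take $c\in Z(A)$ and a typical element $x y$ with $x\in H_{\alpha^{-1}}(A)\subset Z_{\alpha^{-1}}(A)$ and $y\in Z_{\alpha}(A)$. Since $c$ is central, $c(xy)=(cx)y$, and it suffices to observe that $cx\in H_{\alpha^{-1}}(A)$: writing $x=\sum_i d_i a\alpha(a_i)=x_{\alpha(a)}$ and using $x_{\alpha(a)}\in Z_{\alpha^{-1}}(A)$ together with the fact, established in the proof of Lemma~\ref{2.7}, that left multiplication by elements of $A$ on these generators is controlled by the structure constants $r_{ijk}$, one gets $cx=\sum_i d_i (ca')\alpha(a_i)$ for a suitable $a'$; more cleanly, $c x_{\alpha(a)}=x_{\alpha(a)}\alpha^{-1}(c)=\sum_i d_i (a\alpha^{-1}(c))\alpha(a_i)\in H_{\alpha^{-1}}(A)$ using $c\in Z(A)\subset Z_{\alpha^{-1}}(A)$ as an element with trivial twist and $x_{\alpha(a)}\in Z_{\alpha^{-1}}(A)$. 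Hence $c(xy)\in H_{\alpha^{-1}}(A)Z_{\alpha}(A)$, so this product is an ideal of $Z(A)$; the other three are handled symmetrically, swapping the roles of $\alpha$ and $\alpha^{-1}$ and of left/right multiplication.

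Finally I would verify the containment in $H(A)$, which is the part that actually uses the specific shape of the generators rather than abstract twisted-center formalism. Here I would expand a product $x y$ with $x=\sum_i d_i a\alpha(a_i)\in H_{\alpha^{-1}}(A)$ and $y\in Z_{\alpha}(A)$: since $y\in Z_{\alpha}(A)$ we can slide $y$ past $\alpha(a_i)$ via $\alpha(a_i)\,y=y\,\alpha^{-1}(\alpha(a_i))=y a_i$, wait—more precisely $y\alpha(a_i)=\alpha(\alpha(a_i))y$ is the wrong direction, so instead use $\alpha(a_i)y$ and the defining relation of $Z_{\alpha}(A)$ in the form $z b=\alpha(b)z$ to rewrite $\sum_i d_i a\alpha(a_i)y=\sum_i d_i a\,\bigl(\alpha(a_i)y\bigr)$ and then bring it to the form $\sum_i d_i (\text{something}) a_i$, exhibiting it as an element of $H(A)=\{\sum_i d_i b a_i\mid b\in A\}$. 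The main obstacle is exactly this last bookkeeping: getting the commutation relations for $Z_{\alpha}(A)$ versus $Z_{\alpha^{-1}}(A)$ pointed in the right direction so that the $\alpha(a_i)$ (resp.\ $\alpha(a_i)$ absorbed into a generator) collapses back to a bare $a_i$ (resp.\ $d_i$), yielding the Higman-ideal form. Once the direction of each twist is tracked carefully, all four containments follow by the same computation with the obvious substitutions, and combined with the ideal property proved above this completes the proof.
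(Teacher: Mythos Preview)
Your approach is the same as the paper's, and your first instinct for the $H(A)$ containment was already correct: for $y\in Z_{\alpha}(A)$ the defining relation $ya_i=\alpha(a_i)y$ gives exactly $\alpha(a_i)y=ya_i$, so
\[
\Bigl(\sum_i d_i a\,\alpha(a_i)\Bigr)y=\sum_i d_i a\,(\alpha(a_i)y)=\sum_i d_i a\,(y a_i)=\sum_i d_i(ay)a_i\in H(A).
\]
There is no ``wrong direction'' problem; the paper carries out precisely this one-line computation, so your hedging about bookkeeping is unnecessary.

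One point in your ideal argument is actually wrong: you invoke ``$c\in Z(A)\subset Z_{\alpha^{-1}}(A)$'', but $Z(A)$ is \emph{not} contained in $Z_{\alpha^{-1}}(A)$ in general (that would force $(a-\alpha^{-1}(a))c=0$ for all $a$). You do not need this. Either use simple centrality of $c$ to write $c\sum_i d_i a\,\alpha(a_i)=\sum_i d_i(ca)\alpha(a_i)\in H_{\alpha^{-1}}(A)$, or do as the paper does and absorb $c$ into the other factor, using the easy fact $cZ_{\alpha}(A)\subset Z_{\alpha}(A)$.
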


\begin{proof}
Firstly, let us consider $H_{\alpha^{-1}}(A)Z_{\alpha}(A)$. It
follows from Lemma \ref{2.5}, \ref{2.6} and \ref{2.7} that
$H_{\alpha^{-1}}(A)Z_{\alpha}(A)\subset Z(A)$. It is easy to know
that for any $c\in Z(A)$, $cZ_{\alpha}(A)\in Z_{\alpha}(A)$. This
implies that $H_{\alpha^{-1}}(A)Z_{\alpha}(A)$ is an ideal of
$Z(A)$. We now prove that
$H_{\alpha^{-1}}(A)Z_{\alpha}(A)\subseteq H(A).$ In fact, for any
$a\in A$ and $z\in Z_{\alpha}(A)$, we have
\begin{eqnarray*}
x_{\alpha(a)}z
&=&\sum_{i}d_{i}a\alpha(a_{i})z=\sum_{i}d_{i}a(\alpha(a_{i})z)\\
&=&\sum_{i}d_{i}a(za_i)=\sum_{i}d_{i}(az)a_i\in H(A).
\end{eqnarray*}

It is proved similarly for other sets.
\end{proof}

\begin{remark}
A natural generalization of $H_{\alpha^{-1}}(A)$ is
$$\{\sum_{i}d_{i}a\alpha^{m}(a_{i})\mid a\in A\}.$$ It is easy to
check that for all $b\in A$,
$$\sum_{i}d_{i}a\alpha^{m}(a_{i})\alpha^{m}(b)=b\sum_{i}d_{i}a\alpha^{m}(a_{i}).$$
\end{remark}

\bigskip

\section{Dual bases of Frobenius cellular algebras}

Cellular algebras were introduced by Graham and Lehrer in 1996.
The definition of a cellular algebra is as follows.
\begin{dfn} {\rm \cite{GL}}\label{3.1}
Let $R$ be a commutative ring with identity. An associative unital
$R$-algebra is called a cellular algebra with cell datum
$(\Lambda, M, C, i)$ if the following conditions are satisfied:
\begin{enumerate}
\item[(C1)] The finite set $\Lambda$ is a poset. Associated with
each $\lam\in\Lambda$, there is a finite set $M(\lam)$. The
algebra $A$ has an $R$-basis $\{C_{S,T}^\lam \mid S,T\in
M(\lam),\lam\in\Lambda\}$.

\item[(C2)] The map $i$ is an $R$-linear anti-automorphism of $A$
with $i^{2}=id$ which sends $C_{S,T}^\lam$ to $C_{T,S}^\lam $.

\item[(C3)] If $\lam\in\Lambda$ and $S,T\in M(\lam)$, then for any
element $a\in A$, we have\\
$$aC_{S,T}^\lam\equiv\sum_{S^{'}\in
M(\lam)}r_{a}(S',S)C_{S^{'},T}^{\lam} \,\,\,\,(\mod
 A(<\lam)),$$
where $r_{a}(S^{'},S)\in R$ is independent of $T$ and where
$A(<\lam)$ is the $R$-submodule of $A$ generated by
$\{C_{S'',T''}^\mu \mid \mu<\lam,  S'',T''\in M(\mu)\}$.
\end{enumerate}
\end{dfn}

If we apply $i$ to the equation in {\rm(C3)}, we obtain

{$\rm(C3')$} $C_{T,S}^\lam i(a)\equiv\sum_{S^{'}\in
M(\lam)}r_{a}(S^{'},S)C_{T,S^{'}}^{\lam} \,\,\,\,(\mod A(<\lam)).$

It is easy to check that
$$C_{S,T}^\lam C_{U,V}^\lam \equiv\Phi(T,U)C_{S,V}^\lam\,\,\,\, (\rm mod\,\,\, A(<\lam)),$$
where $\Phi(T,U)\in R$ depends only on $T$ and $U$. We say
$\lam\in\Lambda_0$ if there exist $S,T\in M(\lam)$ such that
$\Phi(S,T)\neq 0$.

Let $A$ be a finite dimensional Frobenius cellular $K$-algebra
with a cell datum $(\Lambda, M, C, i)$. Given a non-degenerate
bilinear form $f$,  denote the left dual basis by
$D=\{D_{S,T}^\lam \mid S,T\in M(\lam),\lam\in\Lambda\}$, which
satisfies
$$
\tau(D_{U,V}^{\mu}C_{S,T}^{\lam})=\delta_{\lam\mu}\delta_{SV}\delta_{TU}.
$$
Denote the right dual basis by $d=\{d_{S,T}^\lam \mid S,T\in
M(\lam),\lam\in\Lambda\}$, which satisfies
$$
\tau(C_{S,T}^{\lam}d_{U,V}^{\mu})=\delta_{\lam\mu}\delta_{S,V}\delta_{T,U}.
$$
For $\mu\in\Lambda$, let $A_{d}(>\mu)$ be the $K$-subspace of $A$
generated by
$$\{d_{X,Y}^\epsilon \mid X,Y\in M(\epsilon),\mu<\epsilon\}$$ and let $A_{D}(>\mu)$
be the $K$-subspace of $A$ generated by
$$\{D_{X,Y}^\epsilon \mid X,Y\in M(\epsilon),\mu<\epsilon\}.$$
Note that the $K$-linear map $\alpha$ which sends
$d_{X,Y}^\epsilon$ to $D_{X,Y}^\epsilon$ is a Nakayama
automorphism of the algebra $A$. The following result is easy, we
omit the proof.

\begin{lem}
Keep notations as above. Then $\{\alpha(C_{S,T}^{\lam})\mid
\lam\in\Lambda, S,T\in M(\lam)\}$ is a cellular basis if and only
if $i\alpha=\alpha i$.
\end{lem}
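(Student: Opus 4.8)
The plan is to verify the three cellular axioms (C1)--(C3) directly for the twisted basis $B'=\{\alpha(C_{S,T}^{\lam})\mid \lam\in\Lambda,\ S,T\in M(\lam)\}$, keeping the same poset $\Lambda$, the same index sets $M(\lam)$, and -- this is the point that makes the statement an equivalence rather than a triviality -- the \emph{same} anti-automorphism $i$. Indeed, if one were allowed to change the anti-automorphism, then $j:=\alpha i\alpha^{-1}$ is always an anti-automorphism with $j^2=\mathrm{id}$ and $j(\alpha(C_{S,T}^{\lam}))=\alpha(C_{T,S}^{\lam})$, and (as shown below) (C1) and (C3) hold unconditionally, so $B'$ would always be cellular. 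So throughout I read ``$B'$ is a cellular basis'' as ``$(\Lambda,M,\alpha\circ C,i)$ is a cell datum.''

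For the direction $i\alpha=\alpha i\ \Rightarrow\ B'$ cellular: (C1) is immediate since $\alpha$ is a bijective $K$-linear (indeed algebra) map, so $B'$ is an $R$-basis indexed exactly as $B$ is. For (C2), $i$ is unchanged, hence still an anti-automorphism with $i^2=\mathrm{id}$, and the hypothesis gives $i(\alpha(C_{S,T}^{\lam}))=\alpha(i(C_{S,T}^{\lam}))=\alpha(C_{T,S}^{\lam})$, so $i$ swaps the two lower indices on $B'$. For (C3), I would use that $\alpha$ is an algebra automorphism: for $a\in A$, $a\,\alpha(C_{S,T}^{\lam})=\alpha\big(\alpha^{-1}(a)\,C_{S,T}^{\lam}\big)$; applying (C3) for the original basis to $\alpha^{-1}(a)\,C_{S,T}^{\lam}$ and then applying $\alpha$, and using that $\alpha$ maps the (two-sided) ideal $A(<\lam)$ bijectively onto the span of $\{\alpha(C_{S'',T''}^{\mu})\mid \mu<\lam\}$, one gets $a\,\alpha(C_{S,T}^{\lam})\equiv\sum_{S'}r_{\alpha^{-1}(a)}(S',S)\,\alpha(C_{S',T}^{\lam})$ modulo the lower span for $B'$. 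The new structure constants $r'_a(S',S):=r_{\alpha^{-1}(a)}(S',S)$ are independent of $T$ because the original ones are, so (C3) holds and $(\Lambda,M,\alpha\circ C,i)$ is a cell datum.

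For the converse, if $B'$ is cellular with respect to $i$, then axiom (C2) for $B'$ forces $i(\alpha(C_{S,T}^{\lam}))=\alpha(C_{T,S}^{\lam})$ for all $\lam,S,T$; since the right-hand side equals $\alpha(i(C_{S,T}^{\lam}))$ and $\{C_{S,T}^{\lam}\}$ is a $K$-basis of $A$, the two $K$-linear maps $i\alpha$ and $\alpha i$ coincide on a basis, hence on all of $A$. The only things that need care -- and presumably why the paper calls the result ``easy'' -- are (a) fixing that ``cellular basis'' means with the original anti-automorphism $i$, without which the equivalence fails, and (b) checking that the algebra automorphism $\alpha$ carries the filtration ideals $A(<\lam)$ onto the corresponding ideals for $B'$, which is exactly what allows (C3) to transfer; neither of these presents a genuine obstacle.
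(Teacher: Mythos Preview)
Your argument is correct and complete. The paper itself omits the proof entirely (``The following result is easy, we omit the proof''), so there is nothing to compare against; your direct verification of (C1)--(C3) for the datum $(\Lambda,M,\alpha\circ C,i)$ is exactly the kind of routine check the authors had in mind, and your explicit identification of the implicit convention---that ``cellular basis'' here means cellular \emph{with the original anti-automorphism $i$} and the inherited labelling---is a genuinely useful clarification, since without it the equivalence would collapse.
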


We obtained the following lemma in \cite{LZ} about structure
constants. It will play an important role in this paper.

\begin{lem}\label{3.2}
For arbitrary $\lam,\mu\in\Lambda$ and $S,T,P,Q\in M(\lam)$,
$U,V\in M(\mu)$ and $a\in A$, the following hold:
\begin{enumerate}
\item[(1)] $D_{U,V}^{\mu}C_{S,T}^{\lam}=\sum\limits_{\epsilon\in
\Lambda, X,Y\in
M(\epsilon)}r_{(S,T,\lam),(Y,X,\epsilon),(V,U,\mu)}D_{X,Y}^{\epsilon}.$
\item[(2)] $D_{U,V}^{\mu}C_{S,T}^{\lam}=\sum\limits_{\epsilon\in
\Lambda, X,Y\in
M(\epsilon)}R_{(Y,X,\epsilon),(U,V,\mu),(T,S,\lam)}C_{X,Y}^{\epsilon}.$
\item[(3)] $aD_{U,V}^{\mu}\equiv \sum\limits_{U'\in
M(\mu)}r_{i(\alpha^{-1}(a))}(U,U')D_{U',V}^{\mu}\,\,\,(\mod
A_{D}(>\mu))$.
\smallskip
\item[(4)] $D_{P,Q}^{\lam}C_{S,T}^{\lam}=0\,\,\,\, if \,\,\,Q\neq
S.$
\smallskip
\item[(5)] $D_{U,V}^{\mu}C_{S,T}^{\lam}=0 \,\,\,\,if
\,\,\,\mu\nleq \lam.$
\smallskip
\item[(6)]
$D_{T,S}^{\lam}C_{S,Q}^{\lam}=D_{T,P}^{\lam}C_{P,Q}^{\lam}.$
\smallskip
\item[(7)] $C_{S,T}^{\lam}d_{U,V}^{\mu}=\sum\limits_{\epsilon\in
\Lambda, X,Y\in
M(\epsilon)}r_{(Y,X,\epsilon),(S,T,\lam),(V,U,\mu)}d_{X,Y}^{\epsilon}.$
\item[(8)] $C_{S,T}^{\lam}d_{U,V}^{\mu}=\sum\limits_{\epsilon\in
\Lambda, X,Y\in
M(\epsilon)}R_{(U,V,\mu),(Y,X,\epsilon),(T,S,\lam)}C_{X,Y}^{\epsilon}.$
\item[(9)] $d_{U,V}^{\mu}a\equiv \sum\limits_{V'\in
M(\mu)}r_{\alpha(a)}(V,V')d_{U,V'}^{\mu}\,\,\,\,\,\,\,\,\,\,\,(\mod
A_{d}(>\mu))$.
\smallskip
\item[(10)] $C_{S,T}^{\lam}d_{P,Q}^{\lam}=0\,\, if \,\,T\neq P.$
\smallskip
\item[(11)] $C_{S,T}^{\lam}d_{U,V}^{\mu}=0 \,\,\,\,if\,\,\,
\mu\nleq \lam.$
\smallskip
\item[(12)]
$C_{S,T}^{\lam}d_{T,P}^{\lam}=C_{S,Q}^{\lam}d_{Q,P}^{\lam}.$
\end{enumerate}
\end{lem}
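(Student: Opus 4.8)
The plan is to prove all twelve identities by one mechanism: read off the coefficient of a dual-basis or cell-basis vector in an unknown expansion by pairing against the cell basis with the trace $\tau$, and then feed the result into the cellular axioms (C3), (C3') together with Lemma~\ref{2.2}. The defining relations for $d$ and $D$ supply the \emph{extraction formulas}: the coefficient of $d_{X,Y}^{\epsilon}$ in an element $z$ is $\tau(C_{Y,X}^{\epsilon}z)$, the coefficient of $D_{X,Y}^{\epsilon}$ in $z$ is $\tau(zC_{Y,X}^{\epsilon})$, and the coefficient of $C_{X,Y}^{\epsilon}$ in $z$ is $\tau(zd_{Y,X}^{\epsilon})$. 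Besides these, the two facts used throughout are Lemma~\ref{2.2} in the form $\tau(xy)=\tau(\alpha(y)x)$, which also yields $\tau\circ\alpha=\tau$, and the identity $D_{X,Y}^{\epsilon}=\alpha(d_{X,Y}^{\epsilon})$, so that $\tau(\alpha(z)D_{U,V}^{\mu})=\tau(zd_{U,V}^{\mu})$ for every $z$; in particular this last pairing kills $A(<\mu)$.

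For the explicit expansions (1), (2), (7), (8): applying the extraction formula to $D_{U,V}^{\mu}C_{S,T}^{\lam}$ shows that the coefficient of $D_{X,Y}^{\epsilon}$ equals $\tau(D_{U,V}^{\mu}C_{S,T}^{\lam}C_{Y,X}^{\epsilon})$, and expanding $C_{S,T}^{\lam}C_{Y,X}^{\epsilon}$ in the cell basis and pairing with $D_{U,V}^{\mu}$ picks out exactly the structure constant $r_{(S,T,\lam),(Y,X,\epsilon),(V,U,\mu)}$; this is (1), and (7) is the mirror image for $C_{S,T}^{\lam}d_{U,V}^{\mu}$ using the right dual basis. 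For (2) and (8) one introduces the structure constants $R$ of the right dual basis, $d_{S,T}^{\lam}d_{U,V}^{\mu}=\sum R_{\dots}d_{X,Y}^{\epsilon}$, and the same bookkeeping — now also moving an $\alpha$ across by way of $D=\alpha(d)$ and $\tau(xy)=\tau(\alpha(y)x)$ — expresses the mixed products in the cell basis with the stated coefficients.

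The substantive statements are the cellular-triangularity relations (3) and (9) for the dual bases. For (3), write $aD_{U,V}^{\mu}=\sum c_{X,Y}^{\epsilon}D_{X,Y}^{\epsilon}$; by the extraction formula and Lemma~\ref{2.2},
\[
c_{X,Y}^{\epsilon}=\tau\bigl((aD_{U,V}^{\mu})C_{Y,X}^{\epsilon}\bigr)=\tau\bigl(\alpha(C_{Y,X}^{\epsilon})\,a\,D_{U,V}^{\mu}\bigr)=\tau\bigl(\alpha(C_{Y,X}^{\epsilon}\alpha^{-1}(a))\,D_{U,V}^{\mu}\bigr).
\]
Now $C_{Y,X}^{\epsilon}\alpha^{-1}(a)=C_{Y,X}^{\epsilon}\,i\bigl(i(\alpha^{-1}(a))\bigr)$, so (C3') puts it in the span of the $C^{\nu}$ with $\nu\le\epsilon$ and, modulo $A(<\epsilon)$, equal to $\sum_{U'}r_{i(\alpha^{-1}(a))}(U',X)C_{Y,U'}^{\epsilon}$. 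Pairing back with $D_{U,V}^{\mu}=\alpha(d_{U,V}^{\mu})$ through $\tau\circ\alpha=\tau$ forces $\mu\le\epsilon$, so $aD_{U,V}^{\mu}$ lies in the span of the $D^{\epsilon}$ with $\mu\le\epsilon$, and on the quotient by $A_{D}(>\mu)$ only $\epsilon=\mu$ survives, leaving precisely the transition matrix $r_{i(\alpha^{-1}(a))}(U,U')$ on the first index with the second index $V$ frozen; the contribution of $A(<\epsilon)$ vanishes because $\tau(\alpha(z)D_{U,V}^{\mu})=\tau(zd_{U,V}^{\mu})=0$ for $z\in A(<\mu)$. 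Statement (9) is the mirror image, obtained from right multiplication and (C3) (and not even needing $D=\alpha(d)$), with $\alpha$ playing the role of $i\alpha^{-1}$.

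Finally, the vanishing and independence statements (4), (5), (6), (10), (11), (12) drop out of the same extraction computation. For (5) and (11), $C_{S,T}^{\lam}C_{Y,X}^{\epsilon}$ (or $C_{Y,X}^{\epsilon}C_{S,T}^{\lam}$) lies in the span of the $C^{\nu}$ with $\nu\le\lam$, so the coefficient $\tau(D_{U,V}^{\mu}C_{S,T}^{\lam}C_{Y,X}^{\epsilon})$ (or $\tau(C_{Y,X}^{\epsilon}C_{S,T}^{\lam}d_{U,V}^{\mu})$) vanishes unless $\mu\le\lam$. For (4) and (10), applying (C3') to $C_{S,T}^{\lam}C_{Y,X}^{\epsilon}$ (respectively (C3) to $C_{Y,X}^{\epsilon}C_{S,T}^{\lam}$) and pairing as above produces a Kronecker factor $\delta_{QS}$ (respectively $\delta_{TP}$), which is the assertion. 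For (6) and (12), carrying the same computation through shows the surviving coefficients depend only on the outer indices $T,Q$ (respectively $S,P$) and not on the ``middle'' index — exactly as $r_{a}(S',S)$ in (C3) is independent of $T$ — so the two sides coincide. I expect the only real difficulty to be the bookkeeping: keeping the several index reversals coming from $i$, the $\alpha^{\pm1}$-twists coming from the Nakayama automorphism, and the two constant families $r$ and $R$ mutually consistent, and correctly identifying at each step which of $A(<\lam)$, $A_{d}(>\mu)$, $A_{D}(>\mu)$ is the relevant carrier. The conceptual content is small; the risk of an index slip is the genuine hazard.
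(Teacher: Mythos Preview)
The paper does not actually prove Lemma~\ref{3.2}: it is quoted verbatim from \cite{LZ} (``We obtained the following lemma in \cite{LZ} about structure constants''), so there is no in-paper argument to compare against. That said, your outline is the standard and correct one, and it is almost certainly the argument used in \cite{LZ} as well: extract coefficients via $\tau$ against the appropriate dual basis, and feed the resulting triple products into (C3)/(C3'). Your computation for (3) checks out line by line, and the remaining items are, as you say, bookkeeping consequences of (1), (2), (7), (8) together with the cellular triangularity.

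Two minor remarks. First, for (2) you do not need to shuttle an $\alpha$ around: since the coefficient of $C_{X,Y}^{\epsilon}$ in $z$ is also $\tau(D_{Y,X}^{\epsilon}z)$, one gets directly
\[
\tau\bigl(D_{Y,X}^{\epsilon}D_{U,V}^{\mu}C_{S,T}^{\lam}\bigr)
=\sum R_{(Y,X,\epsilon),(U,V,\mu),(W,Z,\nu)}\,\tau\bigl(D_{W,Z}^{\nu}C_{S,T}^{\lam}\bigr)
=R_{(Y,X,\epsilon),(U,V,\mu),(T,S,\lam)},
\]
and (8) is the mirror with $d$'s on the right. Second, the constants $R$ appearing in (2) and (8) are only defined in the present paper \emph{after} the lemma is stated (just before Lemma~\ref{3.9}); in a self-contained write-up you should introduce them up front, as you do.
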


\begin{remark}\label{3.3}
We have from Lemma \ref{3.2} that if
$i(d_{S,T}^{\lam})=d_{T,S}^{\lam}$ for arbitrary $\lam\in\Lambda$
and $S,T\in M(\lam)$, then $\{d_{S,T}^{\lam}\mid\lam\in\Lambda,
S,T\in M(\lam)\}$ is again cellular with respect to the opposite
order on $\Lambda$. The similar claim also holds for the left dual
basis.
\end{remark}

In \cite{L2}, we obtained a result about the relationship among
$i$, $\tau$ and $\alpha$. For the convenience of the reader, we
copy it here.
\begin{lem}\cite[Theorem 2.5]{L2}\label{3.4}
Let $A$ be a Frobenius cellular algebra with cell datum $(\Lambda,
M, C, i)$. Then one of the following three statements holds
implies that the other two are equivalent:

\begin{enumerate}
\item[(1)] $i(d_{S,T}^{\lam})=d_{T,S}^{\lam}$ and
$i(D_{S,T}^{\lam})=D_{T,S}^{\lam}$ for arbitrary $\lam\in\Lambda$
and $S,T\in M(\lam)$.

\item[(2)] $\alpha=id$, that is, $A$ is a symmetric algebra.

\item[(3)] $\tau(a)=\tau(i(a))$ for all $a\in A$.
\end{enumerate}
\end{lem}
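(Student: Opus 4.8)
The statement bundles together the three implications
$(1)\Rightarrow\bigl((2)\Leftrightarrow(3)\bigr)$, $(2)\Rightarrow\bigl((1)\Leftrightarrow(3)\bigr)$, $(3)\Rightarrow\bigl((1)\Leftrightarrow(2)\bigr)$, and after removing duplicates these amount to exactly three one-directional claims:
\textbf{(a)} $(1)\wedge(3)\Rightarrow(2)$; \textbf{(b)} $(2)\wedge(3)\Rightarrow(1)$; \textbf{(c)} $(1)\wedge(2)\Rightarrow(3)$. So the plan is to prove (a), (b), (c). The common engine is to apply the anti-automorphism $i$ to the two defining equations $\tau(C_{S,T}^{\lam}d_{U,V}^{\mu})=\delta_{\lam\mu}\delta_{SV}\delta_{TU}$ and $\tau(D_{U,V}^{\mu}C_{S,T}^{\lam})=\delta_{\lam\mu}\delta_{SV}\delta_{TU}$ and read off what this says about the dual bases, keeping in mind $D_{X,Y}^{\epsilon}=\alpha(d_{X,Y}^{\epsilon})$ and that $\alpha$ is $K$-linear.

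First I would record the key computation under hypothesis $(3)$, i.e. $\tau\circ i=\tau$. Fix $\mu,U,V$ and expand $i(d_{U,V}^{\mu})=\sum_{\lam,S,T}c_{S,T}^{\lam}D_{S,T}^{\lam}$. Pairing on the right with $C_{T,S}^{\lam}$ under $\tau$ and using the defining property of $D$ isolates $c_{S,T}^{\lam}=\tau\bigl(i(d_{U,V}^{\mu})\,C_{T,S}^{\lam}\bigr)$; now apply $(3)$ to this argument and push $i$ inside to get $c_{S,T}^{\lam}=\tau\bigl(C_{S,T}^{\lam}d_{U,V}^{\mu}\bigr)=\delta_{\lam\mu}\delta_{SV}\delta_{TU}$. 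Hence $(3)$ forces $i(d_{S,T}^{\lam})=D_{T,S}^{\lam}=\alpha(d_{T,S}^{\lam})$ for all $\lam,S,T$ (and, applying $i$, also $i(D_{S,T}^{\lam})=d_{T,S}^{\lam}$). From this, \textbf{(a)} is one line: $(1)$ says $i(d_{S,T}^{\lam})=d_{T,S}^{\lam}$, so $\alpha$ fixes every vector of the basis $d$, whence $\alpha=\mathrm{id}$, which is $(2)$. And \textbf{(b)} is also one line: $(2)$ gives $\alpha=\mathrm{id}$, hence $D=d$, so the computation yields $i(d_{S,T}^{\lam})=D_{T,S}^{\lam}=d_{T,S}^{\lam}$ and $i(D_{S,T}^{\lam})=d_{T,S}^{\lam}=D_{T,S}^{\lam}$, which is $(1)$.

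The remaining implication \textbf{(c)} is the delicate one, and I would handle it via the elementary fact that a Frobenius form is pinned down by any dual basis: if $\tau_{1},\tau_{2}\colon A\to K$ are linear and a basis $\{a_{i}\}$ has the \emph{same} right dual basis $\{d_{i}\}$ for both (i.e. $\tau_{1}(a_{i}d_{j})=\tau_{2}(a_{i}d_{j})=\delta_{ij}$), then $\tau_{1}=\tau_{2}$, because writing $1=\sum_{i}\xi_{i}a_{i}$ gives $\tau_{k}(d_{j})=\tau_{k}(1\cdot d_{j})=\xi_{j}$ independently of $k$, and $\{d_{j}\}$ is a basis. So, assuming $(1)$ and $(2)$, it suffices to verify that $d=\{d_{S,T}^{\lam}\}$ is also a right dual basis of $\{C_{S,T}^{\lam}\}$ with respect to $\tau':=\tau\circ i$. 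This is a short calculation using $\mathrm{(C2)}$, then $(1)$, then $(2)$:
\[
\tau'\bigl(C_{S,T}^{\lam}d_{U,V}^{\mu}\bigr)=\tau\bigl(i(d_{U,V}^{\mu})\,i(C_{S,T}^{\lam})\bigr)=\tau\bigl(d_{V,U}^{\mu}\,C_{T,S}^{\lam}\bigr)=\tau\bigl(D_{V,U}^{\mu}\,C_{T,S}^{\lam}\bigr)=\delta_{\lam\mu}\delta_{SV}\delta_{TU},
\]
the last equality by the defining property of $D$. Therefore $\tau'=\tau$, i.e. $(3)$ holds.

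Assembling (a), (b), (c) gives the lemma. The genuinely nontrivial step is \textbf{(c)}: (a) and (b) are immediate corollaries of the computation under $(3)$, whereas in (c) one has to recognize that $\tau\circ i$ is a priori a different non-degenerate associative form and then identify it with $\tau$; this is exactly the place where \emph{both} hypotheses are consumed — $(1)$ to evaluate $i$ on $d_{U,V}^{\mu}$, and $(2)$ to trade the right dual basis for the left one so the defining relation of $D$ can be invoked.
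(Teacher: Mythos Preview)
Your argument is correct. Note, however, that the paper does not prove this lemma itself: it is quoted verbatim from \cite[Theorem~2.5]{L2}, so there is no in-paper proof to compare against directly. What the paper does supply are two pieces of collateral evidence. First, Remark~3.5 says that in the \cite{L2} proof of $(1)\wedge(3)\Rightarrow(2)$ only one half of (1) is needed (either the $d$-identity or the $D$-identity); your proof of (a) has exactly this feature, since you use only $i(d_{S,T}^{\lam})=d_{T,S}^{\lam}$ after deriving $i(d_{S,T}^{\lam})=\alpha(d_{T,S}^{\lam})$ from (3). So for (a) and (b) your route is consistent with what the paper reports about \cite{L2}.

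For (c), your argument via comparing $\tau$ and $\tau\circ i$ through a shared dual basis is correct, but your assessment that this is ``the delicate one'' is off. The paper's own Lemma~3.6 (proved immediately after the statement you are handling) shows that the $D$-half of (1) \emph{alone} already forces (3), with no appeal to (2): one writes $1=\sum r_{X,Y,\epsilon}D_{X,Y}^{\epsilon}$, computes $\tau(C_{S,T}^{\lam})=r_{T,S,\lam}$ and $\tau(C_{T,S}^{\lam})=r_{S,T,\lam}$, and then applies $i$ to $1=i(1)$ to get $r_{X,Y,\epsilon}=r_{Y,X,\epsilon}$. This is shorter than your (c) and strictly stronger, and it is the mechanism the paper actually uses downstream (Theorem~3.7). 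So while your proof stands, the implication you flag as subtle is in fact the easiest of the three once one looks at the identity element rather than at a second trace form.
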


\begin{remark}\label{3.5}
In the proof of $(1),(3)\Rightarrow (2)$ in \cite{L2}, we only
need that $i(D_{S,T}^{\lam})=D_{T,S}^{\lam}$ or
$i(d_{S,T}^{\lam})=d_{T,S}^{\lam}$ for arbitrary $\lam\in\Lambda$
and $S,T\in M(\lam)$.
\end{remark}

The results of Lemma \ref{3.4} can be partially developed as
follows.

\begin{lem}\label{3.6}
If $i(D_{S,T}^{\lam})=D_{T,S}^{\lam}$ for arbitrary
$\lam\in\Lambda$ and $S,T\in M(\lam)$, then $\tau(a)=\tau(i(a))$
for all $a\in A$.
\end{lem}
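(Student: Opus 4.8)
The plan is to reduce the statement to the single family of identities $\tau(C_{S,T}^{\lam})=\tau(C_{T,S}^{\lam})$ for all $\lam\in\Lambda$ and $S,T\in M(\lam)$. This suffices: $i$ is $K$-linear, $i(C_{S,T}^{\lam})=C_{T,S}^{\lam}$ by {\rm(C2)}, and the $C_{S,T}^{\lam}$ span $A$, so the equality $\tau=\tau\circ i$ follows at once from its values on the cellular basis.

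To establish these identities I would expand the unit in the left dual basis. Since $\{D_{S,T}^{\lam}\}$ is a $K$-basis of $A$, write $1=\sum_{\lam,S,T}c_{S,T}^{\lam}D_{S,T}^{\lam}$ for unique scalars $c_{S,T}^{\lam}$. For fixed $\mu$ and $U,V\in M(\mu)$, multiply this on the right by $C_{V,U}^{\mu}$ and apply $\tau$; the defining relation $\tau(D_{U,V}^{\mu}C_{S,T}^{\lam})=\delta_{\lam\mu}\delta_{SV}\delta_{TU}$ rewrites, after matching indices, as $\tau(D_{S,T}^{\lam}C_{V,U}^{\mu})=\delta_{\lam\mu}\delta_{SU}\delta_{TV}$, so the sum collapses to $c_{U,V}^{\mu}$ while the left-hand side is $\tau(C_{V,U}^{\mu})$. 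Hence $c_{U,V}^{\mu}=\tau(C_{V,U}^{\mu})$, that is, $1=\sum_{\lam,S,T}\tau(C_{T,S}^{\lam})\,D_{S,T}^{\lam}$.

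Now apply $i$ to this expansion. Using $i(1)=1$ together with the hypothesis $i(D_{S,T}^{\lam})=D_{T,S}^{\lam}$ (and that the coefficients $\tau(C_{T,S}^{\lam})$ are scalars) gives $1=\sum_{\lam,S,T}\tau(C_{T,S}^{\lam})\,D_{T,S}^{\lam}$, and interchanging the summation labels $S$ and $T$ this reads $1=\sum_{\lam,S,T}\tau(C_{S,T}^{\lam})\,D_{S,T}^{\lam}$. Comparing the two expressions for $1$ in the basis $\{D_{S,T}^{\lam}\}$ and using linear independence forces $\tau(C_{S,T}^{\lam})=\tau(C_{T,S}^{\lam})$ for all $\lam,S,T$, which finishes the proof by the reduction in the first paragraph.

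I do not expect a genuine obstacle. The only step that needs an idea is producing the explicit expansion $1=\sum\tau(C_{T,S}^{\lam})D_{S,T}^{\lam}$; once it is in hand, applying the anti-automorphism $i$ and invoking $i(1)=1$ finishes everything formally. The points to handle carefully are the index bookkeeping in the dual-basis relation (so that $c_{U,V}^{\mu}=\tau(C_{V,U}^{\mu})$ comes out correctly) and the legitimacy of comparing coefficients, which is merely linear independence of the left dual basis. Note also that the argument uses only the hypothesis on $D$ and nowhere the analogous statement for $d$, consistent with Remark \ref{3.5}.
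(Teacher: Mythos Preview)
Your proof is correct and follows essentially the same approach as the paper: expand $1$ in the left dual basis $D$, identify the coefficients with the values $\tau(C_{T,S}^{\lam})$, apply $i$ (using $i(1)=1$ and the hypothesis), and compare coefficients. Your version is in fact slightly cleaner, being direct rather than by contradiction and avoiding the paper's unnecessary detour through $\alpha^{-1}$ and the right dual basis $d$ to compute $\tau(C_{T,S}^{\lam})$ (which already follows from the first computation by swapping $S$ and $T$).
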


\begin{proof}
Suppose that $\tau(a)\neq\tau(i(a))$ for some $a\in A$. Then there
exists $C_{S,T}^{\lam}$ such that
$\tau(C_{S,T}^{\lam})\neq\tau(C_{T,S}^{\lam})$. Let
$1=\sum_{\epsilon\in \lambda,\,\, X,Y\in
M(\epsilon)}r_{X,Y,\epsilon}D_{X,Y}^{\epsilon}$. Applying
$\alpha^{-1}$ on both sides yields that $1=\sum_{\epsilon\in
\lambda,\,\, X,Y\in
M(\epsilon)}r_{X,Y,\epsilon}d_{X,Y}^{\epsilon}$. Then
$$\tau(C_{S,T}^{\lam})=\tau(\sum_{\epsilon\in \lambda,\,\, X,Y\in
M(\epsilon)}r_{X,Y,\epsilon}D_{X,Y}^{\epsilon}C_{S,T}^{\lam})=r_{T,S,\lam},$$
$$\tau(C_{T,S}^{\lam})=\tau(C_{T,S}^{\lam}\sum_{\epsilon\in \lambda,\,\, X,Y\in
M(\epsilon)}r_{X,Y,\epsilon}d_{X,Y}^{\epsilon})=r_{S,T,\lam}.$$
This implies that $r_{T,S,\lam}\neq r_{S,T,\lam}$.

On the other hand, it follows from
$i(D_{S,T}^{\lam})=D_{T,S}^{\lam}$ that $$1=i(1)=\sum_{\epsilon\in
\lambda,\,\, X,Y\in
M(\epsilon)}r_{Y,X,\epsilon}D_{X,Y}^{\epsilon}.$$ This forces
$r_{X,Y,\epsilon}=r_{Y,X,\epsilon}$ for all $\epsilon\in \Lambda$,
$X,Y\in M(\epsilon)$. It is a contradiction. This implies that
$\tau(a)=\tau(i(a))$ for all $a\in A$.
\end{proof}

Combining Remarks \ref{3.3}, \ref{3.5} with Lemma \ref{3.6} yields
one main result of this section.
\begin{thm}\label{3.7}
Let $A$ be a finite dimensional Frobenius cellular algebra with
cell datum $(\Lambda, M, C, i)$. If the right (left) dual basis is
again cellular, then $A$ is symmetric.
\end{thm}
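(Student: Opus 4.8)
The plan is to deduce the statement from Lemma~\ref{3.4} and Lemma~\ref{3.6} together with Remarks~\ref{3.3} and~\ref{3.5}, reading ``again cellular'' as ``cellular with respect to the given anti-automorphism $i$'' (the situation of Remark~\ref{3.3}, possibly with the opposite order on $\Lambda$), so that axiom (C2) for the dual basis supplies the transpose identities $i(d_{S,T}^{\lam})=d_{T,S}^{\lam}$, respectively $i(D_{S,T}^{\lam})=D_{T,S}^{\lam}$, for all $\lam\in\Lambda$ and $S,T\in M(\lam)$. Suppose first the left dual basis $D$ is again cellular, so $i(D_{S,T}^{\lam})=D_{T,S}^{\lam}$. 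By Lemma~\ref{3.6} this gives $\tau(a)=\tau(i(a))$ for all $a\in A$, i.e.\ statement~(3) of Lemma~\ref{3.4}; and by Remark~\ref{3.5} the implication $(1),(3)\Rightarrow(2)$ needs only this relation in place of the full statement~(1). Hence $\alpha=\mathrm{id}$, that is, $A$ is symmetric.

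For the case in which the right dual basis $d$ is again cellular, I would first prove the analogue of Lemma~\ref{3.6} for $d$. Expand $1=\sum_{\epsilon\in\Lambda,\,X,Y\in M(\epsilon)}r_{X,Y,\epsilon}\,d_{X,Y}^{\epsilon}$; the relation $\tau(C_{S,T}^{\lam}d_{U,V}^{\mu})=\delta_{\lam\mu}\delta_{S,V}\delta_{T,U}$ then gives $\tau(C_{S,T}^{\lam})=r_{T,S,\lam}$ and $\tau(C_{T,S}^{\lam})=r_{S,T,\lam}$, while applying $i$ to $1$ and using $i(d_{X,Y}^{\epsilon})=d_{Y,X}^{\epsilon}$ forces $r_{X,Y,\epsilon}=r_{Y,X,\epsilon}$. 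Therefore $\tau(C_{S,T}^{\lam})=\tau(C_{T,S}^{\lam})=\tau(i(C_{S,T}^{\lam}))$, and by linearity $\tau(a)=\tau(i(a))$ for all $a\in A$; this is statement~(3), and with $i(d_{S,T}^{\lam})=d_{T,S}^{\lam}$ it yields $\alpha=\mathrm{id}$ by Lemma~\ref{3.4} and Remark~\ref{3.5}, exactly as in the left-dual case.

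The step I expect to be the main obstacle is the first one: extracting the transpose identity $i(d_{S,T}^{\lam})=d_{T,S}^{\lam}$ (resp.\ for $D$) for the \emph{original} anti-automorphism $i$ from the bare hypothesis that the dual basis is cellular. One has to argue that a cell datum witnessing cellularity of the dual basis may be taken with anti-automorphism $i$ --- this is the natural reading and is the converse situation to Remark~\ref{3.3} --- and then observe that any residual relabelling of the sets $M(\lam)$ or reversal of the order on $\Lambda$ is harmless, since the computations above use only that $i$ permutes the dual basis in the transpose pattern and that $\tau$ pairs each $C_{S,T}^{\lam}$ with its dual. Everything after that is the bookkeeping already carried out in Lemmas~\ref{3.4} and~\ref{3.6}.
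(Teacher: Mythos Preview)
Your proposal is correct and follows essentially the paper's own approach: the paper proves Theorem~\ref{3.7} by the single sentence ``Combining Remarks~\ref{3.3}, \ref{3.5} with Lemma~\ref{3.6} yields one main result of this section,'' and your argument unpacks exactly this combination, including the right-dual analogue of Lemma~\ref{3.6} that the paper leaves implicit. Your closing paragraph identifying the interpretive step---that ``again cellular'' must be read as cellular with the \emph{same} anti-involution $i$ and the inherited indexing so that (C2) yields the transpose identity---is a fair observation; the paper adopts this reading silently via Remark~\ref{3.3}, and indeed without it the hypothesis would be too weak to force $\alpha=\mathrm{id}$.
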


\begin{remark}
In symmetric case, the condition $\tau(a)=\tau(i(a))$ is
equivalent to that the dual basis being cellular. However, in
Frobenius case, $\tau(a)=\tau(i(a))$ does not implies the
cellularity of dual bases. We will give a counterexample in
Section 4.
\end{remark}

\medskip

Now let us study Nakayama twisted centers of Frobenius cellular
algebras by using dual bases.

Let $A$ be a finite dimensional Frobenius cellular algebra. For
arbitrary $\lam, \,\mu\in \Lambda$, $S,\,T\in M(\lam)$, $U,\,V\in
M(\mu)$, write
$$C_{S,T}^{\lam}C_{U,V}^{\mu}=\sum\limits_{\epsilon\in\Lambda,X,Y\in M(\epsilon)}
r_{(S,T,\lam),(U,V,\mu),(X,Y,\epsilon)}C_{X,Y}^{\epsilon},$$

$$D_{S,T}^{\lam}D_{U,V}^{\mu}=\sum\limits_{\epsilon\in\Lambda,X,Y\in M(\epsilon)}
R_{(S,T,\lam),(U,V,\mu),(X,Y,\epsilon)}D_{X,Y}^{\epsilon}.$$
Applying $\alpha^{-1}$ on both sides of the above equation, we
obtain
$$d_{S,T}^{\lam}d_{U,V}^{\mu}=\sum\limits_{\epsilon\in\Lambda,X,Y\in M(\epsilon)}
R_{(S,T,\lam),(U,V,\mu),(X,Y,\epsilon)}d_{X,Y}^{\epsilon}.$$

For any $\lam\in\Lambda$ and $T\in M(\lam)$, set
$$e_{\lam}=\sum_{S\in
M(\lam)}C_{S,T}^{\lam}d_{T,S}^{\lam}\,\,\,\,{\rm
and}\,\,\,\,L_{\alpha}(A)=\{\sum_{\lam\in\Lambda}r_{\lam}e_{\lam}\mid
r_{\lam}\in R\}.$$

\begin{lem}\label{3.9}
With the notations as above. Then $H_{\alpha}(A)\subseteq
L_{\alpha}(A)\subseteq Z_{\alpha}(A)$. Moreover, $\dim
L_{\alpha}(A)\geq |\Lambda_0|$.
\end{lem}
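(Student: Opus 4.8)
The plan is to establish the two inclusions and then the dimension estimate in sequence. First I would show $L_\alpha(A)\subseteq Z_\alpha(A)$. It suffices to prove $e_\lam\in Z_\alpha(A)$ for each $\lam$, i.e. $e_\lam a=\alpha(a)e_\lam$ for all $a\in A$, and by linearity it is enough to take $a=C_{U,V}^\mu$. Using parts (10) and (12) of Lemma \ref{3.2} one rewrites $e_\lam=\sum_{S}C_{S,T}^\lam d_{T,S}^\lam$ in a form that is manifestly independent of the choice of $T$, which is the key flexibility. Then one computes $e_\lam C_{U,V}^\mu$ by moving $C_{U,V}^\mu$ past $d_{T,S}^\lam$ via Lemma \ref{3.2}(9) (working modulo $A_d(>\lam)$ and controlling the higher terms with (11)), and separately computes $\alpha(C_{U,V}^\mu)e_\lam=D_{U,V}^\mu e_\lam$ — wait, $\alpha(C_{U,V}^\mu)$ need not equal $D_{U,V}^\mu$; instead I would use the characterization of $Z_\alpha(A)$ as $\{x\mid x\alpha^{-1}(a)=ax\}$ and verify $e_\lam\alpha^{-1}(C_{U,V}^\mu)=C_{U,V}^\mu e_\lam$, expanding the left side with (3)/(9) applied to $\alpha^{-1}$ and the right side with the multiplication rule; the structure constants $r$ match up by the independence of $r_a(\cdot,\cdot)$ from the second cell index, exactly as in the proofs of Lemmas \ref{2.6} and \ref{2.7}.

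Next I would show $H_\alpha(A)\subseteq L_\alpha(A)$. Recall $H_\alpha(A)=\{\sum_i a_i a d_i\mid a\in A\}$ where $\{a_i\}$ runs over the cellular basis $\{C_{S,T}^\lam\}$ and $\{d_i\}$ over its right dual $\{d_{S,T}^\lam\}$. So a typical generator is $x_a=\sum_{\lam,S,T}C_{S,T}^\lam\,a\,d_{T,S}^\lam$ (matching the dual-basis pairing $\tau(C_{S,T}^\lam d_{U,V}^\mu)=\delta\delta\delta$, so the dual of $C_{S,T}^\lam$ is $d_{T,S}^\lam$). Plugging $a$ in and using Lemma \ref{3.2}(7) or (9) to expand $a\,d_{T,S}^\lam$, together with part (10) which kills all cross terms $C_{S,T}^\lam d_{P,Q}^{\lam'}$ with mismatched indices, I expect the double sum to collapse so that for each $\lam$ the contribution is a scalar multiple of $\sum_S C_{S,T}^\lam d_{T,S}^\lam=e_\lam$; concretely the coefficient of $e_\lam$ will be something like $\sum_{V} r_a(\text{appropriate indices})$, i.e. a trace-type scalar depending on $a$, and summing over $\lam$ lands in $L_\alpha(A)$ by definition.

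Finally, for the dimension bound I would show the $e_\lam$ with $\lam\in\Lambda_0$ are linearly independent. If $\sum_{\lam}r_\lam e_\lam=0$, project to the top: pick a maximal $\lam$ in the support with $r_\lam\neq0$. Since $e_\lam=\sum_S C_{S,T}^\lam d_{T,S}^\lam$ and each product $C_{S,T}^\lam d_{T,S}^\lam$, by Lemma \ref{3.2}(8), has leading term (in the cellular filtration, top degree $\lam$) governed by the constants $R$ and the Gram-type data $\Phi$; for $\lam\in\Lambda_0$ there exist $S,T$ with $\Phi(S,T)\neq0$, which guarantees $e_\lam$ has a nonzero component in the span of $\{C_{X,Y}^\lam\}$ not cancelled by lower terms or by the other $e_\mu$ (all of which lie in $A(\le\mu)$ with $\mu\not\geq\lam$ or $\mu<\lam$). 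This forces $r_\lam=0$, a contradiction, so $\dim L_\alpha(A)\geq|\Lambda_0|$.

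The main obstacle is the first inclusion $L_\alpha(A)\subseteq Z_\alpha(A)$: one must handle the error terms living in $A_d(>\lam)$ carefully when applying Lemma \ref{3.2}(9), showing they either vanish (via (11), since nothing $>\lam$ can survive multiplication against the fixed $\lam$-level) or cancel after summing over $S$; getting the twisting by $\alpha$ to come out on the correct side — matching $e_\lam\alpha^{-1}(a)$ with $a e_\lam$ rather than naively comparing with $\alpha(a)e_\lam$ — is the delicate bookkeeping point. Once that is clean, the remaining two parts are formal manipulations of the structure constants exactly in the style of Lemmas \ref{2.6}, \ref{2.7} and \ref{3.6}.
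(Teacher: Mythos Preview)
Your proposal is correct and follows essentially the same route as the paper: verify $e_\lam\alpha^{-1}(C_{U,V}^\mu)=C_{U,V}^\mu e_\lam$ by expanding both sides in structure constants and killing cross terms with Lemma~\ref{3.2}(10),(11), then handle $H_\alpha(A)\subseteq L_\alpha(A)$ and the dimension bound by the same manipulations as in \cite{L1}. One small simplification the paper uses that you might adopt: instead of invoking Lemma~\ref{3.2}(9) and tracking error terms in $A_d(>\lam)$, write $d_{T,S}^\lam\alpha^{-1}(C_{U,V}^\mu)=\alpha^{-1}(D_{T,S}^\lam C_{U,V}^\mu)$ and apply the exact identity Lemma~\ref{3.2}(1), so no ``modulo'' bookkeeping is needed at all.
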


\begin{proof}\,The proof of $H_{\alpha}(A)\subseteq L_{\alpha}(A)$ is similar to that of
\cite[Theorem 3.2]{L1} by using Lemma \ref{3.2}. We omit the
details here. Now let us prove $L_{\alpha}(A)\subseteq
Z_{\alpha}(A)$. Clearly, we only need to show that
$e_{\lam}\alpha^{-1}(C_{U,V}^{\mu})=C_{U,V}^{\mu}e_{\lam}$ for
arbitrary $\lam,\mu\in\Lambda$ and $U,V\in M(\mu)$.

On one hand, by the definition of $\alpha$ and Lemma \ref{3.2},
\begin{eqnarray*}
e_{\lam}\alpha^{-1}(C_{U,V}^{\mu})&=&\sum_{S\in
M(\lam)}C_{S,T}^{\lam}\alpha^{-1}(D_{T,S}^{\lam}C_{U,V}^{\mu})\\&=&\sum_{S\in
M(\lam)}\sum_{\epsilon\in\Lambda,X,Y\in
M(\epsilon)}r_{(U,V,\mu),(Y,X,\epsilon),(S,T,\lam)}C_{S,T}^{\lam}d_{X,Y}^{\epsilon}\\
&=&\sum_{S,Y\in
M(\lam)}r_{(U,V,\mu),(Y,T,\lam),(S,T,\lam)}C_{S,T}^{\lam}d_{T,Y}^{\lam}.
\end{eqnarray*}
On the other hand,
\begin{eqnarray*}
C_{U,V}^{\mu}e_{\lam}&=&\sum_{S\in
M(\lam)}\sum_{\epsilon\in\Lambda,X,Y\in
M(\epsilon)}r_{(U,V,\mu),(S,T,\lam),(X,Y,\epsilon)}C_{X,Y}^{\epsilon}d_{T,S}^{\lam}\\
&=&\sum_{S,X\in
M(\lam)}r_{(U,V,\mu),(S,T,\lam),(X,T,\lam)}C_{X,T}^{\lam}d_{T,S}^{\lam}.
\end{eqnarray*}
The proof of $\dim L_{\alpha}(A)\geq |\Lambda_0|$ is similar to
that of \cite[Proposition 3.3 (2)]{L1}.
\end{proof}

Then \cite[Theorem 3.2]{L1} becomes a corollary of Lemma
\ref{3.9}.

\begin{remark}
If we define $e'_{\lam}=\sum_{S\in
M(\lam)}D_{S,S}^{\lam}C_{S,S}^{\lam}$ and
$$L_{\alpha}(A)'=\{\sum_{\lam\in\Lambda}r_{\lam}e'_{\lam}\mid r_{\lam}\in
K\},$$ then there are analogous results on $L_{\alpha}(A)'$.
However, $L_{\alpha}(A)\neq L_{\alpha}(A)'$ in general. We will
give an example in Section 4.
\end{remark}

Certainly, another class of elements of $A$ would not be ignore,
that is, $\sum_{S\in M(\lam)}C_{S,T}^{\lam}D_{T,S}^{\lam}$.
Unfortunately, we know from Lemma \ref{3.2} that
$C_{S,T}^{\lam}D_{T,S}^{\lam}$ is more complex than
$D_{T,S}^{\lam}C_{S,T}^{\lam}$ and hence we do not know more
properties about them  so far.

\smallskip

The next aim of this section is to construct ideals of the center
for a Frobenius cellular algebra. We have the following result.

\begin{prop}\label{3.11}
Let $x\in Z_{\alpha^{-1}}(A)$. Then $\sum_{S\in
M(\lam)}C_{S,T}^{\lam}xD_{T,S}^{\lam}\in Z(A)$ for arbitrary
$\lam\in\Lambda$. Furthermore, $$Z_{\lam}(A):=\{\sum_{S\in
M(\lam)}C_{S,T}^{\lam}xD_{T,S}^{\lam}\mid x\in
Z_{\alpha^{-1}}(A)\}$$ are ideals of $Z(A)$  with
$Z_{\lam}(A)Z_{\mu}(A)=0$ if  $\lam\neq\mu$.
\end{prop}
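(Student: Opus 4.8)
The plan is to verify centrality and the orthogonality relations by direct computation with the structure constants of Lemma~\ref{3.2}, exactly in the spirit of the proof of Lemma~\ref{3.9}. First I would fix $x\in Z_{\alpha^{-1}}(A)$ and write $z_{\lam,x}=\sum_{S\in M(\lam)}C_{S,T}^{\lam}xD_{T,S}^{\lam}$; by Lemma~\ref{3.2}(6) (the analogue for $D$ together with $C_{S,T}^{\lam}d_{T,P}^{\lam}=C_{S,Q}^{\lam}d_{Q,P}^{\lam}$) one checks that this element is independent of the choice of $T\in M(\lam)$, which is needed for the sums below to be manipulated freely. To show $z_{\lam,x}\in Z(A)$ it suffices to show $z_{\lam,x}C_{U,V}^{\mu}=C_{U,V}^{\mu}z_{\lam,x}$ for all $\mu,U,V$. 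I would compute $z_{\lam,x}C_{U,V}^{\mu}=\sum_{S}C_{S,T}^{\lam}xD_{T,S}^{\lam}C_{U,V}^{\mu}$, expand $D_{T,S}^{\lam}C_{U,V}^{\mu}$ by Lemma~\ref{3.2}(1) into a combination of $D_{X,Y}^{\epsilon}$'s, then expand $C_{S,T}^{\lam}x D_{X,Y}^{\epsilon}$ — or first pass $x$ across using $xa=\alpha^{-1}(a)x$, which is precisely the defining property of $Z_{\alpha^{-1}}(A)$ — and collect terms. Similarly expand $C_{U,V}^{\mu}z_{\lam,x}$ using Lemma~\ref{3.2}(2),(8) on the left. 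The two sides should match after re-indexing the summation over $S$ and the internal indices $X,Y$; the role of $x\in Z_{\alpha^{-1}}(A)$ is exactly to absorb the $\alpha^{-1}$ that the dual-basis multiplication rules (3) and (9) introduce, so that the twist cancels and an honest central element results.

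For the ideal property, once $z_{\lam,x}\in Z(A)$ it is enough to note that $Z_{\lam}(A)$ is closed under multiplication by $Z(A)$: for $c\in Z(A)$, $c\cdot z_{\lam,x}=\sum_S C_{S,T}^{\lam}(cx)D_{T,S}^{\lam}$ since $c$ is central, and $cx\in Z_{\alpha^{-1}}(A)$ because $Z(A)Z_{\alpha^{-1}}(A)\subseteq Z_{\alpha^{-1}}(A)$ (same one-line check as in Lemma~\ref{2.5}(3)); hence $cz_{\lam,x}\in Z_{\lam}(A)$. Additivity in $x$ is clear, so $Z_{\lam}(A)$ is an $R$-submodule of $Z(A)$ stable under $Z(A)$, i.e.\ an ideal.

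The orthogonality $Z_{\lam}(A)Z_{\mu}(A)=0$ for $\lam\neq\mu$ is where the real content lies, and I expect it to be the main obstacle. Take $z_{\lam,x}$ and $z_{\mu,y}$ with $x,y\in Z_{\alpha^{-1}}(A)$ and form the product $\sum_{S\in M(\lam),P\in M(\mu)}C_{S,T}^{\lam}xD_{T,S}^{\lam}C_{P,U}^{\mu}yD_{U,P}^{\mu}$. The key is the middle factor $D_{T,S}^{\lam}C_{P,U}^{\mu}$: by Lemma~\ref{3.2}(5) this vanishes unless $\lam\leq\mu$, while the symmetric rearrangement of the product (moving $x$ through via $Z_{\alpha^{-1}}$ and re-associating so that a factor $D_{U,P}^{\mu}C_{S,T}^{\lam}$ or $C_{S,T}^{\lam}d_{U,P}^{\mu}$ appears — using (5) or (11)) forces $\mu\leq\lam$. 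Since $\lam\neq\mu$ these cannot both hold, so the product is $0$. The delicate point is to choose the associativity pattern in the product so that \emph{some} factor of the form ``$D$ (or $d$) of one label times $C$ of the other'' is exposed with the labels in the wrong order; one may need to first rewrite $z_{\mu,y}$ using its $T$-independence, picking $U\in M(\mu)$, and to insert $x\in Z_{\alpha^{-1}}(A)$ between $D_{T,S}^{\lam}$ and $C_{P,U}^{\mu}$ via $D_{T,S}^{\lam}x=x\,\alpha(D_{T,S}^{\lam})$ — here one uses that $\alpha(D_{T,S}^{\lam})\in A_D(\ge\lam)$ has components only in labels $\ge\lam$, keeping the comparison intact. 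Modulo this bookkeeping with the order on $\Lambda$, the vanishing is forced purely by parts (4),(5),(10),(11) of Lemma~\ref{3.2}.
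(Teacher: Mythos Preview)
Your proposal is essentially correct, but it misses a one-line reduction that makes the paper's proof almost immediate. Since $x\in Z_{\alpha^{-1}}(A)$ means $xa=\alpha^{-1}(a)x$ for all $a$, and since $\alpha^{-1}(D_{T,S}^{\lam})=d_{T,S}^{\lam}$ by the very definition of $\alpha$, one has
\[
\sum_{S\in M(\lam)}C_{S,T}^{\lam}\,x\,D_{T,S}^{\lam}
=\sum_{S\in M(\lam)}C_{S,T}^{\lam}\,d_{T,S}^{\lam}\,x
= e_{\lam}x.
\]
Now centrality is free: $e_{\lam}\in Z_{\alpha}(A)$ by Lemma~\ref{3.9}, and $Z_{\alpha}(A)Z_{\alpha^{-1}}(A)\subseteq Z(A)$ by Lemma~\ref{2.5}(3). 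Your direct structure-constant computation would also succeed (it is literally the Lemma~\ref{3.9} computation with an extra $x$ threaded through), but it is unnecessary once this identity is seen. You actually mention ``pass $x$ across using $xa=\alpha^{-1}(a)x$'' as an option; carrying that through \emph{before} expanding anything is the whole point.

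For the ideal property your argument matches the paper's. For orthogonality your idea is the right one and simpler than you fear: with the rewriting above, $z_{\lam,x}z_{\mu,y}$ contains the factor $D_{T,S}^{\lam}C_{P,U}^{\mu}$ in the middle (using the form $z_{\lam,x}=\sum_S C_{S,T}^{\lam}xD_{T,S}^{\lam}$ on the left and $z_{\mu,y}=e_{\mu}y$ on the right), which vanishes unless $\lam\leq\mu$ by Lemma~\ref{3.2}(5); since the product is in $Z(A)$ it equals $z_{\mu,y}z_{\lam,x}$, which by the same argument vanishes unless $\mu\leq\lam$. Hence $\lam\neq\mu$ forces the product to be zero. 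No appeal to $\alpha(D_{T,S}^{\lam})$ or to filtration arguments on $A_D(\geq\lam)$ is needed.
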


\begin{proof}
It follows from $x\in Z_{\alpha^{-1}}(A)$ that $$\sum_{S\in
M(\lam)}C_{S,T}^{\lam}xD_{T,S}^{\lam}=e_{\lam}x.$$ Then Lemma
\ref{2.5} and \ref{3.9} imply that $e_{\lam}x\in Z(A)$. Note that
$cx, xc\in Z_{\alpha^{-1}}(A)$ for $c\in Z(A)$ and $x\in
Z_{\alpha^{-1}}(A)$. Thus  $Z_{\lam}(A)$ is an ideal of $Z(A)$.
Moreover, if $\lam\neq\mu$, then Lemma \ref{3.2} implies that
$Z_{\lam}(A)Z_{\mu}(A)=0$.
\end{proof}

\begin{question}
Denote by $Z_{\Lambda}(A)$ the central ideal generated by
$\cup_{\lam\in\Lambda} Z_{\lam}(A)$. What is the relationship
between $Z_{\Lambda}(A)$ and $H(A)$?
\end{question}

\bigskip

\section{Examples}
As is well known, all symmetric algebras are Frobenius. However,
we have rarely found examples of non-symmetric Frobenius cellular
algebras. In this section, we will give some such examples. The
first one comes from local algebras.

\begin{example}(Nakayama-Nesbitt)
Let $K$ be a field with two nonzero elements $u$, $v$ such that
$u\neq v$. Let $A\subset M_{4\times 4}(K)$ be a $K$-algebra with a
basis $$\{C_{1,1}^1=E_{14}, C_{1,1}^2=E_{13}+uE_{24},
C_{1,1}^3=E_{12}+vE_{34}, C_{1,1}^4=E.\} \eqno(\ast)$$ It is
proved that $A$ is a non-symmetric Frobenius algebra. For details,
see \cite{LT}. Moreover, the basis $(\ast)$ is clearly cellular.
Thus $A$ is a non-symmetric Frobenius cellular algebra. Note that
$A$ is symmetric if $u=v\neq 0$.
\end{example}

\begin{example}
Let $K$ be a field and $Q$ be the following quiver
$$\xymatrix@C=13mm{
 ^{1}\bullet \ar@<2.0pt>[dr]^{\alpha_1} & &  \bullet^{2}\ar@<2.0pt>[dl]^(0.5){\alpha_2}\\
 &  \bullet^0\ar@<2.0pt>[ul]^(0.5){\beta_1}\ar@<2.0pt>[ur]^(0.5){\beta_2}\ar@<2.0pt>[d]^(0.5){\beta_3} & \\
 &  \bullet_{3}\ar@<2.0pt>[u]^(0.5){\alpha_3}
}$$
 with relation
$\rho$ given as follows:
$$\alpha_1\beta_1, \alpha_1\beta_2, \alpha_2\beta_1,
 \alpha_2\beta_3, \alpha_3\beta_2, \alpha_3\beta_3,
 \beta_1\alpha_1-\beta_2\alpha_2, \beta_2\alpha_2-\beta_3\alpha_3.$$
It is easy to check that $A=K(Q,\rho)$ is a Frobenius algebra with
Nakayama permutation $(13)$, that is, $A$ is a non-symmetric
Frobenius algebra. We claim that $A$ is also cellular. Here is a
cellular basis of it.
\[ \begin{matrix}

\begin{matrix} e_3 & \alpha_3\\ \beta_3 &
e_0\end{matrix} \,\,;\quad &
\begin{matrix}
e_1 & \alpha_1 & \alpha_1\beta_3\\
\beta_1 & \beta_1\alpha_1 & \beta_2\\
\alpha_3\beta_1 & \alpha_2 & e_2
 \end{matrix} \,\,;\quad & \begin{matrix} \alpha_2\beta_2
 \end{matrix}.
\end{matrix} \]

Define $\tau$ by\\
(1) $\tau(e_{1})=\cdots=\tau(e_{4})=0$;\\
(2)
$\tau(\alpha_{1}\beta_{3})=\tau(\alpha_{3}\beta_{1})=\tau(\alpha_{2}\beta_{2})=\tau(\beta_1\alpha_1)=1$,\\
(3)$\tau(\alpha_{i})=\tau(\beta_{i})=0$, $i=1,\cdots, 4$.\\

\smallskip

The right dual basis is
\[ \begin{matrix}
\begin{matrix} \alpha_3\beta_1 & \beta_1\\ \alpha_3 &
\beta_1\alpha_1\end{matrix} \,\,;\quad &
\begin{matrix}
\alpha_1\beta_3 & \beta_3 & e_3\\
\alpha_1 & e_0 & \alpha_2\\
e_1 & \beta_2 & \alpha_2\beta_2
 \end{matrix} \,\,;\quad & \begin{matrix} e_2
 \end{matrix}.
\end{matrix} \]
The left dual basis is
\[ \begin{matrix}
\begin{matrix} \alpha_1\beta_3 & \beta_3\\ \alpha_1 &
\beta_1\alpha_1\end{matrix} \,\,;\quad &
\begin{matrix}
\alpha_3\beta_1 & \beta_1 & e_1\\
\alpha_3 & e_0 & \alpha_2\\
e_3 & \beta_2 & \alpha_2\beta_2
 \end{matrix} \,\,;\quad & \begin{matrix} e_2
 \end{matrix}.
\end{matrix} \]
\end{example}

Let us give some remarks on this example.

\smallskip

1. A direct computation yields that $L_{\alpha}(A)$ is a $K$-space
of dimension 3 with a basis $\{\alpha_3\beta_1+\beta_1\alpha_1,
\alpha_1\beta_3, \alpha_2\beta_2\}$, that $L_{\alpha}(A)'$ is a
$K$-space generated by $\{3\alpha_3\beta_1, 2\alpha_1\beta_3,
\alpha_2\beta_2\}$. This implies that $L_{\alpha}(A)\neq
L_{\alpha}(A)'$. Moreover, if $K$ is of characteristic 3, then
$L_{\alpha}(A)'\subsetneqq L_{\alpha}(A)$. Note that
$H_{\alpha}(A)\subset L_{\alpha}(A)'\cap L_{\alpha}(A)$. Thus if
$K$ is of characteristic not 3, then $H_{\alpha}(A)\neq
L_{\alpha}(A)'$ and $H_{\alpha}(A)\neq L_{\alpha}(A)$.

2. By a direct computation we obtain that $i\alpha=\alpha i$.

3. For $S, T, U, V\in M(\lam)$, $d_{S,T}^{\lam}C_{U,V}^{\lam}$ may
not be 0 when $T\neq U$. For example, $d_{13}^2C_{12}^2=\alpha_1$.
Furthermore, let $\lam, \mu\in \Lambda$ with $\mu\nleq\lam$,
$d_{P,Q}^{\mu}C_{U,V}^{\lam}$ may not be 0. For example,
$d_{11}^3C_{32}^2=\alpha_2$.

4. Define a new involution $i'$ on $A$ by
\[\begin{matrix}
i(\alpha_1)=\beta_3 & & i(\alpha_2)=\alpha_2\\
i(\alpha_3)=\beta_1 & & i(e_0)=e_0\\
i(e_1)=e_3 & & i(e_2)=e_2
\end{matrix}.
\] Then the (left)
right dual basis is cellular.

\smallskip

The next example can be found in \cite{KX}. We illustrate it here
for the sake of giving some remarks.
\begin{example}
Let $K$ be a field. Let us take $\lam\in K$ with $\lam\neq 0$ and
$\lam\neq 1$. Let $$A=K<a, b, c, d>/I,$$ where $I$ is generated by
$$a^2, b^2, c^2, d^2, ab, ac, ba, bd, ca, cd, db, dc, cb-\lam bc,
ad-bc, da-bc.$$ If we define $\tau$ by
$\tau(1)=\tau(a)=\tau(b)=\tau(c)=\tau(d)=0$ and $\tau(bc)=1$ and
define an involution $i$ on $A$ to be fixing $a$ and $d$, but
interchanging $b$ and $c$,  then $A$ is a Frobenius cellular
algebra with a cellular basis
\[ \begin{matrix}
\begin{matrix} bc \end{matrix} ;&
\begin{matrix} a & b\\ c &
d\end{matrix} ; &
\begin{matrix} 1 \end{matrix}.
\end{matrix} \]
The right dual basis is
\[ \begin{matrix}
\begin{matrix} 1 \end{matrix} ;&
\begin{matrix} d & c\\ b/\lam &
a\end{matrix} ; &
\begin{matrix} bc \end{matrix}.
\end{matrix} \]
The left dual basis is
\[ \begin{matrix}
\begin{matrix} 1 \end{matrix} ;&
\begin{matrix} d & c/\lam\\ b &
a\end{matrix} ; &
\begin{matrix} bc \end{matrix}.
\end{matrix} \]
\end{example}
\begin{remarks}
1. Clearly, the matrix associated with $\alpha$ with respect to
the right dual basis is
$$\left(
\begin{array}
[c]{cccccc}%
1 & 0 & 0 & 0 & 0 & 0 \\
0 & 1 & 0 & 0 & 0 & 0 \\
0 & 0 & 1/\lambda & 0 & 0 & 0 \\
0 & 0 & 0 & \lambda & 0 & 0 \\
0 & 0 & 0 & 0 & 1 & 0 \\
0 & 0 & 0 & 0 & 0 & 1 \\
\end{array}
\right).
$$ Then the finiteness of the order of $\alpha$ is independent of
cellularity of $A$. In fact, let $K=\mathbb{C}$. If $\lam$ is not
a root of unity, then the order of $\alpha$ is infinite.
Otherwise, the order of $\alpha$ is finite.

2. The example implies that $\tau(a)=\tau(i(a))$ is not a
sufficient condition such that dual bases being cellular in
Frobenius case.

3. We have from this example that in general $Z(A)\cap
Z_{\alpha}(A)\neq \{0\}$. In fact, $bc\in Z(A)$ is clear. On the
other hand, $bc=C_{11}^1d_{11}^1\in Z_{\alpha}(A)$.

4. It is easy to check that $i\alpha\neq \alpha i$ and thus
$\{\alpha(C_{S,T}^{\lam})\mid \lam\in\Lambda, S,T\in M(\lam)\}$ is
not cellular.

5. Note that $d_{1,1}^{2}C_{1,1}^{2}=da$ and
$d_{1,2}^{2}C_{2,1}^{2}=bc/\lam$.  This implies that
$d_{S,T}^{\lam}C_{T,S}^{\lam}\neq d_{S,U}^{\lam}C_{U,S}^{\lam}$ in
general.
\end{remarks}

\bigskip

\noindent{\bf Acknowledgement} The author would like to express
his sincere thanks to Chern Institute of Mathematics of Nankai
University for the hospitality during his visit. He also
acknowledges Prof. Chengming Bai for his kind consideration and
warm help.

\bigskip

\end{document}